\documentclass[11pt]{article}

    \topmargin -1.5cm \evensidemargin 0.5cm \oddsidemargin 0.5cm
    \textwidth15.8cm \textheight23cm

    \usepackage{amsmath}
    \usepackage{amsfonts,color,amssymb,mathrsfs,stmaryrd}
    \usepackage{amsthm}
    \usepackage{amssymb}
    \usepackage[normalem]{ulem}
    \usepackage{enumerate}
	\usepackage{hyperref}

\usepackage{graphicx}
\usepackage{wrapfig}
\usepackage{mathrsfs}



    \newcommand{\eps}{\varepsilon}

\newcommand{\PP}{\mathbb{P}}
\newcommand{\XX}{\mathbb{X}}
\def\Q{\mathbb{Q}}

\renewcommand{\H}{{\cal H}}

\newcommand{\0}{{\bf 0}}

\numberwithin{equation}{section}

    \newtheorem{thm}{Theorem}\numberwithin{thm}{section}

    \newtheorem{prop}[thm]{Proposition}
    \newtheorem{lem}[thm]{Lemma}
    \newtheorem{defn}{Definition}

        \def\N{\mathbb{N}}
    
    \def\E{\mathbb{E}}
    
    \def\0{{\bf 0}}
    
    \def\R{\mathbb{R}}
    
    \def\PP{\mathbb{P}}

    \renewcommand{\E}{\mathbb E \,}

    \newcommand{\MM}{\mathbb{M}}
    
    \newcommand{\tod}{\stackrel{{\cal D}}{\longrightarrow}}
    \newcommand{\todl}{\mathop{\stackrel{\cal D}{\longrightarrow}}\limits_{\la \to \infty}}
    \newcommand{\eqd}{\stackrel{{\cal D}}{=}}

    \renewcommand{\P}{{{\cal P}}}
    \newcommand\Pm{{\cal P}}

    \newcommand\etam{\eta}

    \DeclareMathOperator{\Var}{Var}
    \DeclareMathOperator{\Cov}{Cov}

    \newcommand{\Vol}{{\rm Vol}}

    \def\bdm{\begin{displaymath}}
    \newcommand{\edm}{\end{displaymath}}
    \def\benu{\begin{enumerate}}
    \def\eenu{\end{enumerate}}
    \def\beqn{\begin{equation}}
    \def\eeqn{\end{equation}}
    \def\be{\begin{equation}}
    \def\ee{\end{equation}}
    \def\bea{\begin{eqnarray}}
    \def\eea{\end{eqnarray}}
    \newcommand{\bean}{\begin{eqnarray*}}
    \newcommand{\eean}{\end{eqnarray*}}
    \newcommand{\bear}{\begin{eqnarray}}
    \newcommand{\eear}{\end{eqnarray}}
    
    \newcommand{\1}{{\bf 1}}

    \def\R{\mathbb{R}}

		\def\dint{\textup{d}}

    \def\hR{\hat{\R^d}}

   \def\hnu{\hat{\nu}}

     \def\hx{\hat{x}}
     \def\hy{\hat{y}}
     \def\hz{\hat{z}}
     \def\hW{\hat{W}}

    \def\la{{\lambda}}

    \def\qed{\hfill\hbox{${\vcenter{\vbox{
        \hrule height 0.4pt\hbox{\vrule width 0.4pt height 6pt
        \kern5pt\vrule width 0.4pt}\hrule height 0.4pt}}}$}}

\def\C{{C^{\rho}}}

\numberwithin{equation}{section}  

\newcommand{\footremember}[2]{%
    \footnote{#2}
    \newcounter{#1}
    \setcounter{#1}{\value{footnote}}%
}
 
\title{\bf {Limit theory for unbiased and consistent \\ estimators of statistics of random tessellations}}
\author{%
  Daniela Flimmel\footremember{d}{Charles University, daniela.flimmel@karlin.mff.cuni.cz}%
  , Zbyn\v{e}k Pawlas\footremember{z}{Charles University, pawlas@karlin.mff.cuni.cz}%
  , Joseph E. Yukich\footremember{j}{Lehigh University, jey0@lehigh.edu}
  }
 \date{March 15, 2019}

\begin{document}

  \maketitle

\begin{abstract}
We observe a realization of a stationary generalized weighted Voronoi tessellation of the d-dimensional Euclidean space within a bounded observation window. Given a geometric characteristic of the typical cell, we use the minus-sampling technique to construct an unbiased estimator of the average value of this geometric characteristic. Under mild conditions on the weights of the cells, we establish variance asymptotics and the asymptotic normality of the unbiased estimator as the observation window tends to the whole space. Moreover, the weak consistency is shown for this estimator.
\end{abstract}

\noindent
\textit{Keywords.} Central limit theorem, generalized weighted Voronoi tessellation, minus-sampling, Poisson point process, stabilization, typical cell 

\vspace{0.3cm}
\noindent
\textit{AMS 2010 Subject Classification:} Primary 60F05: Central limit and other weak
theorems; Secondary 60D05: Geometric probability and stochastic geometry, 62G05: Estimation

\section{Introduction}\label{intro} 





Random tessellations are an  important model in stochastic geometry \cite{Chiu,SW08} and they have numerous applications in engineering and the natural sciences \cite{OBS}.  This paper focuses on random Voronoi tessellations of $\R^d$ as well as the so-called generalized weighted Voronoi tessellations. We shall be interested in developing the limit theory for unbiased and consistent estimators of statistics of a typical cell in a generalized weighted Voronoi tessellation.

The estimators are constructed by observing the tessellation within a bounded window.  Unbiased estimators are constructed by considering only those cells which lie within the bounded
window.  This technique, known as minus-sampling, has a long history going back to Miles \cite{M74} as well as Horvitz and Thompson; see \cite{B99} for details. In this paper we use stabilization methods to develop expectation and variance asymptotics, as well as central limit theorems, for unbiased and asymptotically consistent estimators of geometric statistics of a typical cell.

Generalized weighted Voronoi tessellations are defined as follows.  Let $\P$ be a unit intensity stationary point process on $\R^d$.
The points of $\P$ carry independent marks in the space $\MM \subseteq \R^+$ and follow the probability law
$\Q_\MM$.  Thus the atoms of $\P$ belong to $\R^d \times \MM$.  The elements of $\R^d \times \MM$ will be denoted by $\hx := (x,m_x)$.  To define weighted  Voronoi tessellations we introduce
a {\em weight function} $\rho: \R^d \times (\R^d \times \MM) \to \R$ which for each $\hx \in \Pm$ generates the weighted cell
$$
\C(\hx,\Pm) := \left\{ y \in \R^d: \rho(y,\hx) \leq \rho(y,\hz)\;\text{for all}\; \hz \in \Pm\right\}.
$$
Letting $\|x\|$ denote the Euclidean norm of $x$, we focus on the following well-known weights:
\begin{enumerate}
\item[ (i)] Voronoi cell: $\rho_1(y,\hx) := \|x-y\|$,
\item[(ii)] Laguerre cell: $\rho_2(y,\hx) := \|x-y\|^2 - m_x^2$,
\item[(iii)] Johnson--Mehl cell: $\rho_3(y,\hx) := \|x-y\| - m_x$.
\end{enumerate}
Notice that larger values of $m_x$ generate larger cells $\C(\hx,\Pm)$.  Voronoi and Laguerre cells are convex whereas the Johnson-Mehl cells need not be convex. The weight functions  $\rho_i( \cdot, \hx), i = 1,2,3$  generate the Voronoi, Laguerre \cite{LZ}, and Johnson--Mehl tessellations \cite{M92},
respectively and are often called the power of the point $x$. When $\Pm$ is a Poisson point process we shall refer to these tessellations as generalized Poisson--Voronoi weighted tessellations.

Denote by $K^{\rho}_{\0}:= K^{\rho}_{\0}(\Pm)$ the typical cell of a random tessellation defined by the weight $\rho$ and generated by $\Pm$. We denote by $Q^{\rho}$ the distribution of
the typical cell. For a formal definition of the typical cell see e.g.~\cite[Chapter 10]{SW08}.  Denote by ${\bf F}^{d}$
the space of all closed subsets of $\R^d$ and let $h: {\bf F}^d \rightarrow \R$ describe a  geometric characteristic of elements of ${\bf F}^{d}$ (e.g. diameter, volume). We have
two goals: (i) use minus-sampling to construct unbiased estimators of $\E h(K^{\rho}_\0) = \int h(K) \, Q^{\rho} (\dint K)$
and (ii) establish variance asymptotics and asymptotic normality of such
estimators.  As a by-product, we also establish the limit theory for geometric statistics of Laguerre and Johnson--Mehl tessellations, adding to the results of \cite{Pe07, PY1} which are confined to Voronoi tessellations.

\section{Main results}
Let $(\Omega, \mathcal{F}, \PP)$ be the common probability space and let $(\MM, \mathcal{F}_{\MM}, \mathbb{Q}_{\MM})$ be the mark space. Denote by $\hR$ the Cartesian product of $\R^d$
and $\MM$ and by $\hat{\mathcal{F}}$ the product $\sigma$-algebra of $\mathcal{B}(\R^d)$ and $\mathcal{F}_\MM$.
Let ${\bf N}$ be the set of all locally finite marked counting measures on $\hR$. An element of ${\bf N}$ can be interpreted as a marked point configuration. Therefore, we treat
it as a set in the notation. The set ${\bf N}$ is equipped with the standard $\sigma$-algebra $\mathcal{N}$ which is the smallest $\sigma$-algebra such that all  mappings $\pi_A:
{\bf N} \to \N \cup \{0,\infty\}, \P \mapsto \P(A), A \in \hat{\mathcal{F}}$, are measurable.

Define for all $z, x \in \R^d$
$$
C^{\rho}_z(\hx, \Pm) := \C(\hx, \Pm) + (z - x).
$$
Thus $\C(\hx, \Pm) = x + C^{\rho}_{\0}(\hx, \Pm)$ where $\0$ denotes a point at the origin of $\R^d$.

Recall that  $h: {\bf F}^d \to \R$ measures a geometric characteristic of elements of ${\bf F}^{d}$. We assume that $h$ is invariant with respect to shifts, namely for all $x \in \R^d$ and $m_x \in \MM$
$$
h(\C((x,m_x), \P)) = h(x + C^{\rho}_{\0}((x,m_x), \P)) = h(C^{\rho}_{\0}((x,m_x), \P)).
$$

Put $W_\la := [ -\frac{\la^{1/d}} {2}, \frac{\la^{1/d}} {2}]^d$ and $\hW_\la := W_\la \times \MM$, $\la>0$.
Given $h$ and a tessellation defined by the weight $\rho$, we define for all $\la > 0$
$$
H_{\lambda}^{\rho}(\Pm \cap \hW_\la )  := \sum_{\hx \in \Pm \cap \hW_\la}  \frac{  h(\C(\hx, \Pm)) }  { \Vol( W_\la \ominus \C(\hx, \Pm) ) } \,
\1 \{ \C(\hx, \Pm) \subseteq W_\la \}.
$$
Here, for sets $A$ and $B$, $A \ominus B := \{x \in \R^d: B+x \subseteq A\}$ denotes the {\em erosion} of $A$ by $B$.
The statistic $H_{\lambda}^{\rho}(\Pm \cap \hW_\la )$ disregards cells contained in the window $W_\la$ that are generated by the points outside $W_\la$.
Such cells do not exist in the Voronoi case but they could appear for weighted cells.
Therefore, we may also consider
$$
H_{\lambda}^{\rho}(\Pm)  := \sum_{\hx \in \Pm}  \frac{    h(\C(\hx, \Pm)) }  { \Vol( W_\la \ominus \C(\hx, \Pm) ) }\, \1\{ \C(\hx, \Pm) \subseteq W_\la \}.
$$

For every weight $\rho$ we define the score  $\xi^{\rho}: \hR \times {\bf N} \to \R$ by
\begin{equation}\label{repre}
\xi^{\rho}(\hx, \mathcal{A}) :=  h(\C(\hx, \mathcal{A}))\1\{\C(\hx,\mathcal{A}) \text{ is bounded}\}, \quad \hx \in \hR,\, \mathcal{A} \in {\bf N}.
\end{equation}
We use this representation to explicitly link our statistics with the stabilizing statistics in the literature \cite{BY05, BBY, LSY, Pe07, Pe, PY1, PY4}. Translation invariance for $h$ implies
\begin{equation*} 
\xi^{\rho}(\hx, \mathcal{A}) = \xi^{\rho}((x, m_x), \mathcal{A}) = \xi^{\rho}((\0, m_x),\mathcal{A}-x),
\end{equation*}
for every $\hx \in \hR,\, \hx := (x, m_x)$ and $\mathcal{A} \in {\bf N} $, where $\mathcal{A} - x := \{(a - x, m_a): (a, m_a) \in \mathcal{A}\}$.
If $\C(\hx,\Pm)$ is empty we put $\xi^{\rho}(\hx,\Pm) = h(\emptyset) = 0$. 
Write $\xi^{\rho}(\hx,\Pm) := \xi^{\rho}(\hx,\Pm \cup \{\hx\})$ for $\hx \not\in \Pm$.

\begin{defn}
The score $\xi^{\rho}$ is said to satisfy a $p$-moment condition, $p \in [1, \infty)$, if
\begin{equation} \label{mom}
\sup_{\hx, \hy \in \hR } \E | \xi^{\rho}(\hx, \Pm  \cup \{\hy\} )|^p < \infty.
\end{equation}
\end{defn}

For $r \in (0, \infty)$ and $y \in \R^d$, we denote by $B_r(y)$ the closed Euclidean ball of radius $r$ centered at $y$.
\begin{defn}\label{defD}
We say that the cells of the tessellation defined by $\rho$ and generated by $\P$ have diameters with exponentially decaying tails if
there is a constant $c_{diam} \in (0, \infty)$ such that
for all $\hx := (x,m_x)  \in \P$ there exists an almost surely finite random variable $D_{\hx}$ such that $\C(\hx,\Pm) \subseteq B_{D_{\hx}}(x)$ and
\begin{equation}\label{eq:stab}
\PP(D_{\hx} \ge t) \le c_{diam}\exp\left(-\frac{1}{c_{diam}}\, t^d \right), \quad t \geq 0.
\end{equation}
\end{defn}

\begin{defn}\label{defR}
We say that $\xi^{\rho}$ is stabilizing with respect to $\Pm$ if for all $\hx := (x,m_x) \in \Pm$ there exists an almost surely finite random variable $R_{\hx}:= R_{\hx}(\Pm)$, henceforth called a radius of
stabilization, such that
\begin{equation} \label{stabdef}
\xi^{\rho}(\hx,(\Pm \cup \mathcal{A}) \cap \hat{B}_{R_{\hx}}(x))  = \xi^{\rho}(\hx,\Pm \cup \mathcal{A})
\end{equation}
for all $\mathcal{A}$ with $\text{card}( \mathcal{A} ) \leq 7$ and
where $\hat{B}_r(y) := B_r(y) \times \MM$.  We say that $\xi^{\rho}$ is exponentially stabilizing with respect to $\Pm$ if there are constants $c_{stab}, \alpha \in (0, \infty)$ such that
\[
\PP(R_{\hx} \geq t) \leq c_{stab}\exp\left(-\frac{1}{c_{stab}}\,t^\alpha \right), \quad t \ge 0.
\]
\end{defn}

In other words,  $\xi^{\rho}$ is stabilizing with respect to $\cal P$ if there is $R_{\hx}$ such that the cell $\C(\hx,\Pm)$ is not affected by changes in point
configurations outside $\hat{B}_{R_{\hx}}(x)$.

Controlling the moments of $H_{\lambda}^{\rho}(\Pm \cap \hW_\la )$ is problematic since $\Vol( W_\la \ominus \C(\hx, \Pm) )$ may become arbitrarily small. It will therefore  be convenient to consider the following versions of $H_{\lambda}^{\rho}(\Pm \cap \hW_\la ) $ and $H_{\lambda}^{\rho}(\Pm)$.
Put
$$
\hat{H}_\la ^{\rho}(\Pm \cap \hW_\la ) := \sum_{\hx \in \Pm \cap \hW_\la} \frac{ h(\C(\hx, \Pm))\,\1 \{ \C(\hx,\Pm) \subseteq W_\la \}}  { \Vol( W_\la \ominus \C(\hx, \Pm) ) }
\, \1\{ \Vol(W_\la \ominus \C(\hx, \Pm) ) \geq \frac{\la} {2} \}
$$
and
$$
\hat{H}_\la ^{\rho}(\Pm) := \sum_{\hx \in \Pm} \frac{  h(\C(\hx, \Pm)) \, \1\{ \C(\hx,\Pm) \subseteq W_\la \}}  { \Vol( W_\la \ominus \C(\hx, \Pm) ) }
\, \1\{ \Vol(W_\la \ominus \C(\hx, \Pm) ) \geq  \frac{\la} {2} \}.
$$

By $\eta_\lambda, \lambda \in (0, \infty),$ we denote a homogeneous marked Poisson point process on $\hR$ such that the unmarked process on $\R^d$ has rate $\lambda$.
We write $\eta$ for $\eta_1$.
Our main results establish the limit theory for the above estimators and go as follows.
We assume the marks of $\Pm$ and $\eta$ belong to the interval $\MM := [0, \mu]$ for some constant $\mu \in [0, \infty)$.

\begin{thm}  \label{unbias}
Let $\Pm$ be an independently marked stationary point process with unit intensity and with marks following the law $\mathbb{Q}_{\mathbb{M}}$.
 Let $h: {\bf F}^d \rightarrow \R$ be a translation invariant function as above.
Let $M_\0$ be a random mark distributed according to $\Q_\MM$.

\begin{enumerate}[(i)]
\item The statistic $ H_{\lambda}^{\rho}(\Pm)$ is an unbiased estimator of $\E h(K^{\rho}_\0)$.

\item If $\xi^{\rho}$ satisfies the $p$-moment condition \eqref{mom} for some $p \in (1, \infty)$ and if the cell $C^{\rho}((\0,M_\0),\eta)$ has a diameter with an exponentially decaying tail, then  $H_{\lambda}^{\rho}(\eta \cap \hW_\la),$ $\hat{H}_{\lambda}^{\rho}(\eta)$ and $\hat{H}_{\lambda}^{\rho}(\eta \cap \hW_\la)$ are asymptotically unbiased estimators of $\E h(K^{\rho}_\0)$.

\item Under the conditions of $(ii)$ and assuming that $\xi^\rho$ stabilizes
with respect to $\eta$ as at \eqref{stabdef}, the statistics $ H_{\lambda}^{\rho}(\eta)$,  $ H_{\lambda}^{\rho}(\eta \cap \hW_\la),$ $ \hat{H}_{\lambda}^{\rho}(\eta)$ and $\hat{H}_{\lambda}^{\rho}(\eta \cap \hW_\la)$ are consistent estimators of $\E h(K^{\rho}_\0)$.
\end{enumerate}
\end{thm}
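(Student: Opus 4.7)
My plan for exact unbiasedness is the refined Campbell--Mecke theorem. Applied to $\E H_\la^\rho(\Pm)$ for the stationary unit-intensity process $\Pm$, it gives
\[
\E H_\la^\rho(\Pm) = \int_{\R^d}\int_\MM \E\left[\frac{h(\C(\hx, \Pm \cup \{\hx\}))}{\Vol(W_\la \ominus \C(\hx, \Pm \cup \{\hx\}))}\,\1\{\C(\hx, \Pm \cup \{\hx\}) \subseteq W_\la\}\right] \Q_\MM(\dint m_x)\, \dint x,
\]
with $\hx = (x,m_x)$. Stationarity of $\Pm$ together with the covariance relation $\C((x,m),\P) = x + \C((\0,m),\P - x)$, the identity $\Vol(W_\la \ominus (x+K)) = \Vol(W_\la \ominus K)$, and the equivalence $\{x+K \subseteq W_\la\} = \{x \in W_\la \ominus K\}$ let me shift the generator to the origin so that the only $x$-dependence is the indicator. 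Fubini then makes the $x$-integral collapse to $\Vol(W_\la \ominus \C)$, cancelling the denominator; integrating the remaining expectation against $\Q_\MM$ recovers $\int h(K)\,Q^\rho(\dint K) = \E h(K^\rho_\0)$ by the definition of the typical cell.

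\textbf{Part (ii).} For $H_\la^\rho(\eta \cap \hW_\la)$ I write $\E H_\la^\rho(\eta) - \E H_\la^\rho(\eta \cap \hW_\la)$ as a Campbell integral over $x \notin W_\la$ with $\C(\hx, \eta) \subseteq W_\la$. Any such cell satisfies $D_{\hx} \geq \dist(x, W_\la)$, which by \eqref{eq:stab} has exponentially small probability; combining H\"older's inequality with the $p$-moment condition \eqref{mom} for $p > 1$ then drives the expected difference to zero. For $\hat H_\la^\rho(\eta)$ I repeat the Campbell--Mecke computation of part (i) while keeping the extra indicator $\1\{\Vol(W_\la \ominus \C) \geq \la/2\}$; the $x$-integral still cancels the denominator, yielding
\[
\E \hat H_\la^\rho(\eta) = \E\left[h(K^\rho_\0)\,\1\{\Vol(W_\la \ominus K^\rho_\0) \geq \la/2\}\right].
\]
Since $K^\rho_\0$ has a.s.\ finite diameter, $\Vol(W_\la \ominus K^\rho_\0)/\la \to 1$ a.s., and the indicator converges to $1$; dominated convergence, legitimized by the $p$-moment bound, delivers the limit $\E h(K^\rho_\0)$. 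The case $\hat H_\la^\rho(\eta \cap \hW_\la)$ combines both devices.

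\textbf{Part (iii).} Consistency reduces, via Chebyshev and part (ii), to showing $\Var(T_\la) \to 0$ for each of the four estimators $T_\la$. I would apply Mecke's formula to express the second moment as a single-point Palm integral plus a two-point Palm covariance integral of weighted scores $\xi^\rho(\hx, \eta)/\Vol(W_\la \ominus \C(\hx, \eta))$. Stabilization \eqref{stabdef} implies that $\xi^\rho(\hx, \eta)$ and $\xi^\rho(\hy, \eta)$ are determined by disjoint configurations once $\|x-y\|$ exceeds the sum of their stabilization radii; combined with the $p$-moment bound and the diameter tail from (ii), this forces the covariance to decay in $\|x - y\|$. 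Accounting for the $O(\la)$ contributing points and the fact that the weight is of order $1/\la$ (enforced on the hat versions by the cut-off $\Vol \geq \la/2$) yields $\Var(T_\la) = O(\la^{-1})$, hence $L^2$- and stochastic convergence to $\E h(K^\rho_\0)$.

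\textbf{Main obstacle.} The principal difficulty lies with the unhatted $H_\la^\rho(\eta)$ and $H_\la^\rho(\eta \cap \hW_\la)$ in part (iii), where $1/\Vol(W_\la \ominus \C(\hx, \eta))$ can blow up when a cell nearly fills $W_\la$. My remedy would be to split the score according to whether $D_{\hx}$ is below or above a threshold growing like $(\log \la)^{1/d}$: on the small-cell event $\Vol(W_\la \ominus \C)$ is of order $\la$ and the stabilization argument proceeds as for the hat versions, while on the large-cell event the tail \eqref{eq:stab} dominates the potential singularity of the weight after using H\"older against the $p$-moment bound. The same splitting underlies the asymptotic unbiasedness of $H_\la^\rho(\eta \cap \hW_\la)$ in part (ii).
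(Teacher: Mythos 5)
Your part (i) is essentially the paper's argument: shift the cell to the origin by translation invariance, let the $x$-integration produce $\int_{W_\la}\1\{x\in W_\la\ominus K\}\,\dint x$, which cancels the denominator, and identify what remains with $\E h(K^{\rho}_\0)$. One caveat: the identity you write, with $\Pm\cup\{\hx\}$ appearing under the original (stationary) expectation, is the Mecke equation and is valid only for Poisson input; part (i) concerns a general stationary $\Pm$, so the expectation must be a Palm expectation (the paper packages this as Campbell's theorem for stationary particle processes). The conclusion is unaffected. Your treatment of $\hat{H}^{\rho}_\la(\eta)$ in part (ii) --- exact computation of the bias followed by dominated convergence --- is a legitimate, slightly softer alternative to the paper's quantitative Lemma \ref{L5.2}.

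There are, however, two genuine gaps. The first is the singular weight. For $H^{\rho}_\la(\eta\cap\hW_\la)$ in part (ii), and for both unhatted estimators in part (iii), you repeatedly invoke ``H\"older against the $p$-moment bound'' to absorb $1/\Vol(W_\la\ominus\C(\hx,\eta))$, and your ``main obstacle'' paragraph claims the diameter tail dominates the singularity on the large-cell event. This does not close: condition \eqref{mom} controls moments of $h$ only, and no moment of $1/\Vol(W_\la\ominus\C(\hx,\eta))$ is available --- on the event $\{\C(\hx,\eta)\subseteq W_\la\}$ the erosion volume can be arbitrarily close to $0$, so H\"older leaves you with a factor $\E\bigl[\Vol(W_\la\ominus\C)^{-q}\cdots\bigr]$ that you cannot bound. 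The only mechanism that tames the weight is the one you already used in part (i): perform the spatial integration \emph{first}, so that $\int\1\{x\in W_\la\ominus\C\}\,\dint x\big/\Vol(W_\la\ominus\C)\le 1$, and apply H\"older only to what survives, namely $\E\bigl[|h|\,\1\{\Vol(W_\la\ominus\C)<\la/2\}\bigr]$ or $\E\bigl[|h|\,\1\{D>r_\la\}\bigr]$. This is exactly what the paper's Lemmas \ref{L5.1} and \ref{L5.2} do; every appearance of an unhatted estimator in your argument must be routed through this cancellation (or through the hatted versions plus Lemma \ref{L5.3}, where the cutoff makes the weight at most $2/\la$).

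The second gap is in part (iii). Your Chebyshev/second-moment route needs $\E\,\xi^{\rho}(\hx,\eta)^2<\infty$ and an integrable (or at least Ces\`aro-negligible) covariance decay, but the theorem assumes only the $p$-moment condition for \emph{some} $p\in(1,\infty)$, possibly $p<2$, and only almost-sure stabilization with no tail bound on $R_{\hx}$; turning qualitative stabilization into covariance decay via the usual conditioning-on-$\{R\le\|x-y\|/3\}$ argument consumes moments of order strictly greater than $2$. So your method establishes consistency only under strictly stronger hypotheses than stated. The paper avoids second moments entirely: it introduces $T_\la(\eta\cap\hW_\la)=\la^{-1}\sum_{\hx\in\eta\cap\hW_\la}\xi^{\rho}(\hx,\eta)$, invokes the $L^1$ weak law of large numbers for stabilizing functionals (Theorem 2.1 of \cite{PY4}, which requires only $p>1$ and a.s.\ stabilization), and then shows $\E|\hat{H}^{\rho}_\la(\eta\cap\hW_\la)-T_\la(\eta\cap\hW_\la)|\to 0$ by proving that the weight ratio tends to $1$ in probability and is uniformly bounded. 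You should either adopt that reduction or add $p>2$ and quantitative stabilization to your hypotheses.
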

\vskip.3cm

Note that $ H_{\la}^{\rho}(\P \cap \hW_\la),$ $ \hat{H}_{\lambda}^{\rho}(\P)$ and $ \hat{H}_{\lambda}^{\rho}(\P \cap \hW_\la)$ are not unbiased. Under the assumptions of Theorem \ref{unbias}, one instead has
$$
 \E H_{\lambda}^{\rho}(\Pm \cap \hW_\la ) = \E \left(h(K^{\rho}_\0)\frac{\Vol(W_\la \cap (W_\la \ominus K^{\rho}_\0))}{\Vol(W_\la \ominus K^{\rho}_\0)}\right),
$$
$$
 \E \hat{H}_{\lambda}^{\rho}(\Pm \cap \hW_\la ) = \E \left(h(K^{\rho}_\0)\frac{\Vol(W_\la \cap (W_\la \ominus K^{\rho}_\0))}{\Vol(W_\la \ominus K^{\rho}_\0)}\,\1\{\Vol(W_\la \ominus K^{\rho} _\0) \geq \frac{\la} {2} \}\right),\\
$$
and
$$
 \E \hat{H}_{\lambda}^{\rho}(\Pm) = \E \left(h(K^{\rho}_\0) \1 \{\Vol(W_\la \ominus K^{\rho}_\0) \geq  \frac{\la} {2} \}\right).\\
$$
The general form of the bias is given by Theorem 1 of \cite{B99}.

\vskip.3cm

Given the score $\xi^{\rho}$  at \eqref{repre}, put
\begin{align}\label{defsig}
\sigma^2(\xi^{\rho}) & :=  \E (\xi^{\rho} (\0_M, \eta))^2 \\
& \ \ + \int_{\R^d}  \left[\E \xi^{\rho}(\0_M, \eta \cup \{x_M\}) \xi^{\rho}(x_M, \eta \cup \{\0_M\}) - \E \xi^{\rho}(\0_M, \eta)\,\E\xi^{\rho}(x_M, \eta)\right] \,\dint x, \nonumber
\end{align}
where $\0_M := (\0,M_\0)$, $x_M := (x,M_x)$, and $M_\0$ and $M_x$ are independent random marks distributed according to $\Q_\MM$. Note that $\E h(K^{\rho}_{\0}(\eta)) = \E \xi^{\rho} (\0_M, \eta)$.

\begin{thm} \label{Thm1}
Let $h$ be translation invariant and assume that $\xi^{\rho}$ is exponentially stabilizing with respect to $\eta$.
\begin{enumerate}[(i)]
\item If $\xi^{\rho}$ satisfies the $p$-moment condition \eqref{mom} for some $p \in (2, \infty)$, then
\begin{equation} \label{varlimit}
\lim_{\la \to \infty}  \la \Var \hat{H}_{\lambda}^{\rho}(\eta \cap \hW_\la )  = \lim_{\la \to \infty} \la  \Var \hat{H}_{\lambda}^{\rho}(\eta)
= \sigma^2(\xi^{\rho}) \in [0, \infty).
\end{equation}

\item If $\sigma^2(\xi^{\rho}) \in (0, \infty)$ and if the $p$-moment condition \eqref{mom} holds for some $p \in (4, \infty)$, then
$$
 \sqrt{ \la} \left(  H_{\lambda}^{\rho}(\eta \cap \hW_\la )   -  \E H_{\lambda}^{\rho}(\eta \cap \hW_\la )   \right)   \todl N(0, \sigma^2(\xi^{\rho}))
$$
and
$$
 \sqrt{ \la}  \left(  H_{\lambda}^{\rho}(\eta )   -  \E h(K^{\rho}_\0 (\eta) ) \right)   \todl N(0, \sigma^2(\xi^{\rho})),
$$
where $N(0, \sigma^2(\xi^{\rho}))$ denotes  a mean zero Gaussian random variable with  variance $ \sigma^2(\xi^{\rho})$.
\end{enumerate}
\end{thm}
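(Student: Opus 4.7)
The plan is to reduce both statements to known variance asymptotics and central limit theorems for sums of exponentially stabilizing scores on marked Poisson point processes (cf.\ \cite{BY05, Pe, PY1, LSY}). Let
$$
S_\la := \sum_{\hx \in \eta \cap \hW_\la} \xi^{\rho}(\hx, \eta)
$$
denote the corresponding unweighted score sum. Under exponential stabilization (\ref{stabdef}) and the moment bound (\ref{mom}), the standard theory directly yields $\la^{-1} \Var S_\la \to \sigma^2(\xi^{\rho})$ when $p>2$, and $\la^{-1/2}(S_\la - \E S_\la) \todl N(0,\sigma^2(\xi^{\rho}))$ when $p>4$; the Campbell formula gives $\E S_\la = \la\, \E h(K^{\rho}_\0(\eta))$, which matches the centering required in the statement.

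The heart of the proof is the $L^2$ comparison
$$
\Var\bigl(\la\, \hat{H}_\la^\rho(\eta \cap \hW_\la) - S_\la\bigr) = o(\la),
$$
and similarly for $\la H_\la^\rho$ in place of $\la \hat H_\la^\rho$. Writing $\la \hat{H}_\la^\rho(\eta \cap \hW_\la) = \sum_{\hx \in \eta \cap \hW_\la} \xi^{\rho}(\hx,\eta)\, g_\la(\hx,\eta)$ with bounded weight
$$
g_\la(\hx,\eta) := \frac{\la\, \1\{\C(\hx,\eta) \subseteq W_\la\}\, \1\{\Vol(W_\la \ominus \C(\hx,\eta)) \ge \la/2\}}{\Vol(W_\la \ominus \C(\hx,\eta))} \in [0,2],
$$
I would split each point of $\eta \cap \hW_\la$ according to its distance from $\partial W_\la$ and the size of its stabilization radius $R_{\hx}$. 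For \emph{bulk} points with $\dist(x,\partial W_\la) \ge (\log\la)^{2/d}$ and $R_{\hx} \le (\log\la)^{2/d}/2$, the inclusion $W_\la \ominus \C(\hx,\eta) \supseteq W_\la \ominus B_{R_{\hx}}(\0)$ forces $g_\la(\hx,\eta) = 1 + O(\log\la/\la^{1/d})$, and their total variance contribution is $o(\la)$ by the second-moment bound on $\xi^\rho$. For the remaining points, exponential stabilization makes the probability of the bad event decay faster than any polynomial in $\la^{-1}$, and the Campbell--Mecke formula combined with (\ref{mom}) bounds the expected squared contribution by $o(\la)$.

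Given this approximation, Cauchy--Schwarz yields $\la^2 \Var \hat{H}_\la^\rho(\eta \cap \hW_\la) = \Var S_\la + o(\la) \sim \la \sigma^2(\xi^{\rho})$, proving (i); the version for $\hat{H}_\la^\rho(\eta)$ follows after estimating the additional contributions from points $\hx \in \eta \setminus \hW_\la$ whose cells reach into $W_\la$ through the $O(\log\la)$-thick boundary layer permitted by exponential stabilization. For (ii), $\la^{-1/2}(\la H_\la^\rho - S_\la) \to 0$ in $L^2$ combined with the CLT for $S_\la$ and Slutsky's theorem delivers the stated Gaussian limits. The principal obstacle is the $L^2$ comparison: the weight $g_\la$ is not translation invariant, so the standard Baryshnikov--Yukich machinery does not apply directly; one must exploit exponential stabilization to show that bulk cells satisfy $g_\la \approx 1$ and handle boundary and large-cell contributions via Palm calculus and the moment hypothesis. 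A further subtlety appears for the untruncated $H_\la^\rho$, where $1/\Vol(W_\la \ominus \C)$ can blow up on rare events, and one must verify via the exponential tail of $R_{\hx}$ that these atypical events do not inflate the limiting variance.
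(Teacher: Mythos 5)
Your route---reduce everything to the unweighted stabilizing sum $S_\la=\sum_{\hx\in\eta\cap\hW_\la}\xi^\rho(\hx,\eta)$, quote the known variance and CLT results for such sums, and control the difference---is genuinely different from the paper's. The paper never compares to $S_\la$: for (i) it computes $\la\Var\hat H^\rho_\la$ directly by the Slivnyak--Mecke formula, proves a covariance-decay lemma for exponentially stabilizing scores (its Lemma \ref{L5}), and passes to the limit by dominated convergence after the substitution $u=\la^{-1/d}x$; for (ii) it applies Theorem 2.3 of \cite{LSY} \emph{to the weighted, truncated scores themselves} and then removes the truncation via $L^1$ estimates (Lemmas \ref{L5.1}--\ref{L5.2}). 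Your modular reduction is attractive, but as written it has two concrete gaps.

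First, the bulk estimate. You claim that for bulk points $g_\la=1+O(\log\la/\la^{1/d})$ and that "their total variance contribution is $o(\la)$ by the second-moment bound on $\xi^\rho$." The second-moment bound alone gives only
$\E\bigl(\sum_{\hx}|\xi^\rho(\hx,\eta)|\,|g_\la(\hx,\eta)-1|\bigr)^2 = O\bigl((\log\la)^2\la^{-2/d}\bigr)\cdot O(\la^2)=O\bigl(\la^{2-2/d}(\log\la)^2\bigr)$,
which is \emph{not} $o(\la)$ for any $d\ge 2$. To convert the per-point smallness of $g_\la-1$ into an $o(\la)$ variance bound you must exploit the decay of correlations between the modified scores $\xi^\rho(\hx,\eta)(g_\la(\hx,\eta)-1)$ at distant points, i.e.\ you need precisely the covariance-decay estimate that the paper isolates as Lemma \ref{L5} (these modified scores do stabilize with the same radii, so such a lemma applies, but it has to be stated and used; it is the real engine of the proof, not an optional refinement).

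Second, the untruncated statistic. Your plan requires $\E\bigl(\la H^\rho_\la(\eta\cap\hW_\la)-S_\la\bigr)^2=o(\la)$, but on the event $\{\C(\hx,\eta)\subseteq W_\la,\ \Vol(W_\la\ominus\C(\hx,\eta))<\la/2\}$ the factor $\la/\Vol(W_\la\ominus\C(\hx,\eta))$ is unbounded, and the exponential tail of $D_{\hx}$ controls only the \emph{probability} of this event, not the size of $1/\Vol$ on it; no square-integrability of this factor is available. This is exactly why the theorem states the variance asymptotics (i) only for the truncated $\hat H^\rho_\la$, and why the paper handles the untruncated version exclusively through the $L^1$ bound of Lemma \ref{L5.1}, where the troublesome factor is neutralized by integrating $\1\{x\in W_\la\ominus\C\}/\Vol(W_\la\ominus\C)$ over $x\in W_\la$ (giving at most $1$). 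Your argument goes through if you downgrade the comparison involving $H^\rho_\la$ to an $L^1$ (hence in-probability) comparison, which is all Slutsky needs for part (ii), and reserve the $L^2$ comparison for $\hat H^\rho_\la$ versus $S_\la$ with the covariance-decay input above.
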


\noindent{\em Remarks.} (i)  The assumption  $\sigma^2(\xi^{\rho}) \in (0, \infty)$ is often satisfied by scores of interest, as seen in the upcoming applications.
According to Theorem 2.1 in \cite{PY1}, where it has been shown that whenever we have
$$\frac{\sum_{\hx \in \eta \cap \hW_\la} (\xi^\rho (\hx, \eta) - \E \xi^\rho(\hx, \eta))}{\sqrt{\Var \sum_{\hx \in \eta \cap \hW_\la} \xi^\rho (\hx, \eta)}} \tod N(0, \sigma^2(\xi^\rho)),$$
then necessarily  $\sigma^2(\xi^{\rho}) \in (0, \infty)$ provided (a) there is a random variable $S < \infty$ and a random variable $\Delta^\rho(\infty)$ such that for all finite ${\cal A}
\subseteq \hat{B}_S(\0)^c$ we have
\begin{align*}
\Delta^\rho(\infty) = & \sum_{ \hx \in (\eta \cap \hat{B}_S(\0)) \cup {\cal A} \cup \{\0_M \} }
 \xi^\rho(\hx, (\eta \cap \hat{B}_S(\0)) \cup {\cal A} \cup \{\0_M\})\\
  &  \hspace{2cm} - \sum_{ \hx \in (\eta \cap \hat{B}_S(\0)) \cup {\cal A}  } \xi^\rho(\hx, (\eta \cap \hat{B}_S(\0)) \cup {\cal A} ),
 \end{align*}
and (b) $\Delta^\rho(\infty)$ is non-degenerate.  We will use this fact in showing positivity of $\sigma^2(\xi^{\rho})$ in the applications which follow.

\noindent(ii) Theorems \ref{unbias} and \ref{Thm1} hold for translation invariant statistics $h$ of Poisson--Voronoi cells regardless of the mark distribution 
because  $\xi^{\rho_1}$ stabilizes exponentially fast and diameters of Voronoi cells have exponentially decaying tails as  shown in \cite{Pe,PY1}.
In Section \ref{stabsection} we establish that the cells of the Laguerre and the Johnson--Mehl tessellations also have diameters with exponentially decaying tails and that $\xi^{\rho_i}, i=2, 3$ are exponentially stabilizing with respect to $\eta$.

\vskip.3cm

\noindent{\bf Applications.}   We provide some applications of our main results.  The proofs are provided in the sequel.  Our first result gives the limit theory for an unbiased estimator of the
distribution function of the volume of a typical cell in a generalized weighted Poisson--Voronoi tessellation.

\begin{thm} \label{thmApp1}
(i) For all $i= 1, 2, 3$ and  $t \in (0, \infty)$ we have that
\[
\sum_{\hx \in \etam}  \frac{ \1\{\Vol (C^{\rho_i}(\hx, \etam)) \leq  t \} }  { \Vol( W_\la \ominus C^{\rho_i}(\hx, \etam) ) }\,\1\{ C^{\rho_i}(\hx, \etam) \subseteq W_\la \}
\]
is an unbiased estimator of $\PP (\Vol(K^{\rho_i}_\0 (\eta) ) \leq  t)$.

\noindent (ii) It is the case that for all $ t \in (0, \infty)$
\begin{equation} \label{CLTapplic}
 \sqrt{ \la}  \left( \sum_{\hx \in \etam}  \frac{ \1\{\Vol (C^{\rho_i}(\hx, \etam)) \leq   t \} }  { \Vol( W_\la \ominus C^{\rho_i}(\hx, \etam) ) }\, \1\{ C^{\rho_i}(\hx, \etam) \subseteq W_\la \}   -  \PP (\Vol(K^{\rho_i}_\0 (\eta)  ) \leq  t ) \right)
\end{equation}
tends to  $N(0, \sigma^2(\varphi^{\rho_i}))$ in distribution  as $\lambda \to \infty$, where
$\varphi^{\rho_i}(\hx, \etam) := \1\{\Vol (C^{\rho_i}(\hx, \etam)) \leq  t \}$ and where $\sigma^2(\varphi^{\rho_i}) \in (0, \infty)$ is given by  \eqref{defsig}.
\end{thm}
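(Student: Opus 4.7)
The plan is to specialize Theorems \ref{unbias}(i) and \ref{Thm1}(ii) to the translation invariant functional $h(K) := \1\{\Vol(K) \leq t\}$, for which the associated score at \eqref{repre} coincides (on the event that the cell is bounded) with $\varphi^{\rho_i}(\hx,\mathcal{A}) = \1\{\Vol(C^{\rho_i}(\hx,\mathcal{A})) \leq t\}$. Translation invariance is immediate, since the volume is invariant under shifts, and boundedness gives $|\varphi^{\rho_i}| \leq 1$ and hence $\E h(K^{\rho_i}_\0(\eta)) = \PP(\Vol(K^{\rho_i}_\0(\eta)) \leq t)$.

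For part (i), the sum in the statement is exactly $H_\lambda^{\rho_i}(\eta)$ with this choice of $h$, so unbiasedness is a direct consequence of Theorem \ref{unbias}(i). No further work is needed here.

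For part (ii), I would apply Theorem \ref{Thm1}(ii) to the score $\varphi^{\rho_i}$, which requires verifying: (a) exponential stabilization with respect to $\eta$; (b) the $p$-moment condition \eqref{mom} for some $p \in (4, \infty)$; and (c) $\sigma^2(\varphi^{\rho_i}) \in (0,\infty)$. Condition (b) is trivial because $|\varphi^{\rho_i}| \leq 1$. Condition (a) reduces to the stabilization of the cell $C^{\rho_i}(\hx,\eta)$ itself: any radius of stabilization for the cell in the sense of Definition \ref{defR} also stabilizes every bounded functional of the cell, in particular $\varphi^{\rho_i}$. For $i=1$ this is from \cite{Pe,PY1}, and for $i=2,3$ from the results of Section \ref{stabsection} together with Definition \ref{defD} applied to the Laguerre and Johnson--Mehl weights.

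The main obstacle is (c). Finiteness of $\sigma^2(\varphi^{\rho_i})$ will follow from standard stabilization arguments: boundedness of $\varphi^{\rho_i}$ combined with exponential stabilization forces the two-point correlation in the integrand of \eqref{defsig} to decay exponentially in $\|x\|$, so the integral converges. Positivity is more delicate and I intend to verify it through the criterion in Remark (i) following Theorem \ref{Thm1}, by exhibiting a non-degenerate $\Delta^{\rho_i}(\infty)$. Concretely, I would fix a small radius $r$ and condition on two disjoint events of positive probability for the configuration inside $\hat{B}_r(\0)$: on the first event $\hat{B}_r(\0)$ contains no points of $\eta$, so adding $\0_M$ creates a cell whose volume strictly exceeds $t$ with positive probability, contributing $0$; on the second event $\hat{B}_r(\0)$ contains a sufficiently dense cluster of points of $\eta$, forcing the cell of $\0_M$ to have volume strictly below $t$ and contribute $1$. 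Together with the exponential tail of cell diameters, one can then arrange, on both events, that the volumes of the finitely many neighbouring cells are bounded away from the threshold $t$ both before and after the insertion of $\0_M$, so that the net change $\Delta^{\rho_i}(\infty)$ is exactly $0$ in the first case and exactly $1$ in the second. This gives the desired non-degeneracy and hence $\sigma^2(\varphi^{\rho_i}) > 0$, completing all hypotheses of Theorem \ref{Thm1}(ii) and yielding \eqref{CLTapplic}.
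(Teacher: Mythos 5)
Your overall architecture coincides with the paper's: part (i) is Theorem \ref{unbias}(i) applied to $h(K)=\1\{\Vol(K)\leq t\}$, and part (ii) reduces to checking the moment condition (trivial, since $|\varphi^{\rho_i}|\leq 1$), exponential stabilization (from Section \ref{stabsection}), and positivity of $\sigma^2(\varphi^{\rho_i})$ via the non-degeneracy criterion of Remark (i) after Theorem \ref{Thm1}. Up to that point there is nothing to object to.

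The gap is in your positivity argument, which is precisely the only non-routine step. The criterion requires a random $S$ and a random variable $\Delta^{\rho_i}(\infty)$ such that the add-one cost takes the \emph{same} value for \emph{all} finite $\mathcal{A}\subseteq \hat{B}_S(\0)^c$; it is not enough that inserting $\0_M$ changes the statistic with positive probability. Your two events only constrain $\eta$ inside a small ball $\hat{B}_r(\0)$, so (a) the cell of $\0_M$ and the cells of its neighbours are not screened off from $\mathcal{A}$, and (b) the assertion that ``one can arrange that the volumes of the finitely many neighbouring cells are bounded away from the threshold $t$ both before and after the insertion'' is exactly the hard part and is left unproved: inserting $\0_M$ strictly shrinks the neighbouring cells, and nothing in your events prevents a neighbour from crossing the level $t$, which would contaminate $\Delta^{\rho_i}(\infty)$ by an uncontrolled integer. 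The paper's construction is built to solve both problems at once: it conditions on a lattice-like event $E_{L,N}$ in which one Poisson point sits within $\eps_L$ of the centre of each of $N^d$ subcubes of $[-L/2,L/2]^d$, so that every cell in the relevant region has volume close to $(L/N)^d$ with a uniform margin (all volumes $\leq t$ for $L=L_1$ small, all volumes $>t$ for $L=L_2$ large, giving $\Delta^{\rho_1}(\infty)=1$ and $0$ respectively), and so that the insertion of $\0_M$ only affects cells inside $\hat{R}_{L,N}\subseteq \hat{B}_{S_L}(\0)$, making the add-one cost insensitive to $\mathcal{A}$. If you want to keep your local empty-ball/dense-cluster events, you must add explicit conditioning that surrounds the origin with enough points to localize all affected cells inside $\hat{B}_S(\0)$ and that keeps every neighbouring cell's volume away from $t$ by more than the shrinkage caused by $\0_M$.

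A secondary omission: for $i=2,3$ your positivity argument says nothing about the marks, whereas the volume of a Laguerre or Johnson--Mehl cell depends on them. The paper handles this by intersecting $E_{L,N}$ with the event that all marks in $[-L/2,L/2]^d$ lie in a small interval $I_\alpha(\mu^*)$ of positive $\Q_\MM$-measure, so that these cells nearly coincide with the Voronoi cells; some such device is needed in your argument as well.
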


Our next result gives the limit theory for an unbiased estimator of the $(d-1)$-dimensional Hausdorff measure $\H^{d-1}$  of the boundary of a typical cell in a generalized
weighted Poisson--Voronoi tessellation.

\begin{thm} \label{thmApp2}
(i) For all $i = 1,2,3$ we have that
$$ \sum_{\hx \in \etam  }  \frac{ {\cal H}^{d-1} ( \partial C^{\rho_i}(\hx, \etam)) }  { \Vol( W_\la \ominus C^{\rho_i}(\hx, \etam) ) }\,\1\{ C^{\rho_i}(\hx, \etam) \subseteq W_\la \}
$$
is an unbiased estimator of $\E {\cal H}^{d-1}(\partial K^{\rho_i}_\0 (\eta) ).$

\noindent (ii) It is the case that
$$
 \sqrt{ \la}  \left( \sum_{\hx \in \etam  }  \frac{ {\cal H}^{d-1} ( \partial C^{\rho_i}(\hx, \etam)) }  { \Vol( W_\la \ominus C^{\rho_i}(\hx, \etam) ) }\,\1\{ C^{\rho_i}(\hx, \etam) \subseteq W_\la \}   -  \E {\cal H}^{d-1}( \partial K^{\rho_i}_\0 (\eta) )  \right)
$$
tends to  $N(0, \sigma^2(\xi^{\rho_i}))$ in distribution as $\lambda \to \infty$, where \[\xi^{\rho_i}(\hx, \etam):= {\cal H}^{d-1} ( \partial C^{\rho_i}(\hx, \etam))\1\{C^{\rho_i}(\hx,\etam) \text{ is bounded}\}\] and where $\sigma^2(\xi^{\rho_i}) \in (0, \infty) $ is given by  \eqref{defsig}.
\end{thm}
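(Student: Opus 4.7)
The plan is to derive both parts of Theorem \ref{thmApp2} from Theorems \ref{unbias} and \ref{Thm1} by taking $h(K) := \H^{d-1}(\partial K)$ and verifying the relevant hypotheses for the induced score $\xi^{\rho_i}(\hx, \A) = \H^{d-1}(\partial C^{\rho_i}(\hx, \A))\,\1\{C^{\rho_i}(\hx, \A) \text{ is bounded}\}$. Since $\H^{d-1}(\partial(K+z)) = \H^{d-1}(\partial K)$ for every $z \in \R^d$, the functional $h$ is translation invariant, so part (i) follows immediately from Theorem \ref{unbias}(i). For part (ii) we apply Theorem \ref{Thm1}(ii), which requires (a) exponential stabilization of $\xi^{\rho_i}$ with respect to $\eta$, (b) the $p$-moment condition \eqref{mom} for some $p > 4$, and (c) strict positivity of $\sigma^2(\xi^{\rho_i})$. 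Item (a) is classical for $i=1$ by \cite{Pe,PY1} and is established for $i=2,3$, together with exponentially decaying cell diameters in the sense of Definition \ref{defD}, in Section \ref{stabsection}.

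For (b), the exponential tail \eqref{eq:stab} of $D_{\hx}$ reduces the question to a polynomial upper bound on $\H^{d-1}(\partial C^{\rho_i}(\hx, \eta \cup \{\hy\}))$ in terms of $D_{\hx}$. For Voronoi ($i=1$) and Laguerre ($i=2$) cells, which are convex and contained in $B_{D_{\hx}}(x)$, monotonicity of surface area under convex inclusion gives
$$
\H^{d-1}(\partial C^{\rho_i}(\hx, \eta \cup \{\hy\})) \leq \H^{d-1}(\partial B_{D_{\hx}}(x)) = c_d\, D_{\hx}^{d-1}.
$$
For Johnson--Mehl cells ($i=3$), which may be non-convex, the boundary is contained in the union of the hyperboloid bisectors of $\hx$ with those neighbors in $\eta \cup \{\hy\}$ that can reach the ball $B_{D_{\hx}}(x)$. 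The number of such neighbors is stochastically dominated by $\eta(\hat B_{c(\mu) D_{\hx}}(x))$, which has Poisson tails, and the $\H^{d-1}$-measure of each hyperboloid piece inside $B_{D_{\hx}}(x)$ is bounded by $c(\mu)\, D_{\hx}^{d-1}$ because the marks are uniformly bounded by $\mu$. These bounds yield $\sup_{\hx, \hy} \E |\xi^{\rho_i}(\hx, \eta \cup \{\hy\})|^p < \infty$ for every $p \in [1, \infty)$, covering any $p > 4$.

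Positivity in (c) is obtained from the criterion in Remark (i) following Theorem \ref{Thm1}. Take $S$ to be the radius of stabilization at the origin (inflated by a deterministic constant). The insertion of $\0_M$ into $(\eta \cap \hat{B}_S(\0)) \cup \A$ creates a new cell around the origin and trims finitely many neighboring cells, so $\Delta^{\rho_i}(\infty)$ equals the algebraic change in total boundary surface area of the affected cells. This quantity depends continuously on the positions of the points of $\eta \cap \hat{B}_S(\0)$ surrounding the origin, whose joint law has a density; consequently $\Delta^{\rho_i}(\infty)$ is not almost surely constant and is therefore non-degenerate, so that Theorem 2.1 of \cite{PY1} gives $\sigma^2(\xi^{\rho_i}) > 0$. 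The main obstacle in the whole argument is the boundary area bound in the non-convex Johnson--Mehl case, where one cannot invoke convex monotonicity and must instead argue via the hyperboloid facet structure.
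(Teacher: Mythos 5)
Parts (i) and the verification of stabilization and the moment condition are essentially sound. Your moment bound takes a slightly different route from the paper for $i=1,2$: you invoke monotonicity of surface area under inclusion of convex bodies to get $\H^{d-1}(\partial C^{\rho_i}) \le c_d D_{\hx}^{d-1}$ directly, whereas the paper bounds the boundary for all three cases uniformly by summing $\H^{d-1}(\partial \mathbb{H}_{\hz}(\hx) \cap B_{D_{\hx}}(x))$ over the at most $\eta(\hat B_{R_{\hx}}(x))$ contributing neighbours and then applying Cauchy--Schwarz to the product $D_{\hx}^{d-1}\eta(\hat B_{R_{\hx}}(x))$. Your convexity shortcut is cleaner where it applies, and your facet-by-facet treatment of the Johnson--Mehl case is in substance the same as the paper's argument; both are fine.

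The genuine gap is in the positivity of $\sigma^2(\xi^{\rho_i})$. You argue that $\Delta^{\rho_i}(\infty)$ ``depends continuously on the positions of the points \dots whose joint law has a density; consequently \dots is not almost surely constant.'' This implication is false: a continuous function of an absolutely continuous random vector can perfectly well be constant, so nothing about non-degeneracy follows from continuity plus existence of a density. To apply the criterion of Remark (i) after Theorem \ref{Thm1} you must actually exhibit two events of positive probability on which $\Delta^{\rho_i}(\infty)$ takes different values. The paper does this by constructing, for two scales $L_1 < L_2$, lattice-like configurations $E_{L,N}$ of Poisson points near the centers of $N^d$ subcubes of $[-L/2,L/2]^d$; on $E_{L,N}$ the inserted origin creates a new face whose area is pinned between $c_{\min}(L/N)^{d-1}$ and $c_{\max}(L/N)^{d-1}$ up to $O(\eps_L(L/N)^{d-2})$ corrections, so choosing $L_2 - L_1$ large forces distinct values of $\Delta^{\rho_1}(\infty)$ on $E_{L_1,N}$ and $E_{L_2,N}$ (and a mark-restriction event $F_{L,N,\alpha}$ transfers this to $\rho_2,\rho_3$). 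A second, more minor, issue: taking $S$ to be merely the stabilization radius of the score at the origin does not guarantee that $\Delta^{\rho_i}(\infty)$ is independent of the outside configuration $\A$, since the sum involves the scores of \emph{all} points whose cells are affected by the insertion; one needs a radius controlling every cell touched by the change, which the localized cube construction provides automatically. Without an explicit two-configuration argument of this kind, your proof of $\sigma^2(\xi^{\rho_i})>0$ does not go through.
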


There are naturally other applications of the general theorems.  By choosing $h$ appropriately, one could for example use the general results to deduce the limit theory for an unbiased estimator of the distribution function of
either the surface area, inradius, or circumradius of a typical cell in a generalized weighted Poisson--Voronoi tessellation.

\section{Stabilization of tessellations} \label{stabsection}
In this section we establish that (i) the cells in the Voronoi, Laguerre and Johnson--Mehl tessellations generated by Poisson input have diameters with exponentially decaying tails (see
Definition \ref{defD}) and (ii) the scores $\xi^{\rho_i}, \, i=1, 2, 3,$ as defined at \eqref{repre} are exponentially stabilizing (see Definition \ref{defR}). These two conditions arise in the
statements of  Theorems \ref{unbias} and \ref{Thm1}. Note that conditions (i) and (ii) have been already established in the case of the Poisson--Voronoi tessellation ($\rho_1$) in \cite{Pe} and \cite{PY1}. The Voronoi cell is a special example of both the Laguerre and the Johnson--Mehl cell when putting $\MM = \{0\}$ (or any constant). Thus it will be enough to show that these two conditions hold for the Laguerre ($\rho_2$) and the Johnson--Mehl ($\rho_3$) tessellations.

By definition we have
\[
\C(\hx,\Pm) = \bigcap_{\hz \in \Pm \setminus \{\hx\}} \mathbb{H}_{\hz}^{\rho}(\hx),
\]
where $\mathbb{H}_{\hz}^{\rho}(\hx) := \{y \in \R^d: \rho(y,\hx) \leq \rho(y,\hz)\}$.  Note that $\mathbb{H}_{\cdot}^{\rho}(\cdot)$ is a closed half-space in the context of the Voronoi and Laguerre
tessellations, whereas it has a hyperbolic boundary
for the Johnson--Mehl tessellation.
Tessellations generated by $\Pm$ are stationary and are examples of stationary particle processes, see \cite[Section 2.8]{BR} or \cite[Section 10.1]{SW08}.

\begin{prop} \label{lemm:radius} 
The cells of the tessellation defined by $\rho_i, i = 1,2,3,$ and generated by Poisson input $\eta$ have diameters with exponentially decaying tails as at \eqref{eq:stab}.
\end{prop}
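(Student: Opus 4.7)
\noindent\textit{Proof plan for Proposition \ref{lemm:radius}.}
The Voronoi case has already been established in \cite{Pe,PY1}, so it suffices to handle the Laguerre ($\rho_2$) and Johnson--Mehl ($\rho_3$) tessellations. The strategy is to adapt the classical cone argument for Voronoi cells, using the boundedness of the marks, $\MM = [0,\mu]$, to absorb the weight-induced correction uniformly.

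The geometric heart of the proof is the following claim. Fix a half-angle $\theta_0 > 0$ small enough that $\cos 2\theta_0 \geq 3/4$, and let $\Gamma$ be a cone with apex $x$ and half-angle $\theta_0$. Suppose $z \in \Gamma$ and $y \in \Gamma$, and set $s' := \|x-z\|$, $r := \|x-y\|$. For Johnson--Mehl, the bound $m_x - m_z \leq \mu$ makes $\|z-y\| + \mu < \|x-y\|$ a sufficient condition for $\hz$ to exclude $y$ from $\C(\hx, \eta)$; for Laguerre, $m_x^2 - m_z^2 \leq \mu^2$ makes $\|z-y\|^2 + \mu^2 < \|x-y\|^2$ sufficient. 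Using the law of cosines together with $\cos \alpha \geq \cos 2\theta_0$ for the angle $\alpha$ at $x$, both reduce to an inequality of the form $s'^2 + c_1(\mu) r \leq 2 r s' \cos 2\theta_0$, which is straightforward to verify for $s' \in [2\mu, r/2]$ and $r$ larger than a constant multiple of $\mu$.

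With this claim in hand, I would partition $\R^d$ into $C_d = C_d(\theta_0, d)$ cones $\Gamma_1, \ldots, \Gamma_{C_d}$ emanating from $x$. For $t$ larger than a threshold $t_0 = t_0(\mu)$, let $A(t)$ be the event that each cone $\Gamma_j$ contains at least one point of $\eta$ in the annular sector $\Gamma_j \cap (B_{t/2}(x) \setminus B_{2\mu}(x))$. On $A(t)$, any $y$ with $\|x-y\| \geq t$ lies in some $\Gamma_j$ and, by the geometric claim, is excluded by the Poisson point guaranteed in that cone; hence $\C(\hx, \eta) \subseteq B_t(x)$, so one may take $D_{\hx} \leq t$.

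The probability estimate is then routine. By the Poisson void probability, $\PP(\eta \cap (\Gamma_j \cap (B_{t/2}(x) \setminus B_{2\mu}(x))) = \emptyset) = \exp(-V_j(t))$, and a direct computation gives $V_j(t) \geq c(d, \theta_0, \mu)\, t^d$ for all $t \geq t_0$. A union bound over the $C_d$ cones yields $\PP(A(t)^c) \leq C_d \exp(-c(d, \theta_0, \mu)\, t^d)$, and absorbing the prefactor $C_d$ together with the finite range of small $t$ into the constant $c_{diam}$ gives the required bound \eqref{eq:stab}. The main technical step is the law-of-cosines verification of the geometric claim; the remaining probabilistic pieces are entirely standard for Poisson input.
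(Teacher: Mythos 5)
Your proposal is correct and follows essentially the same route as the paper: a decomposition into finitely many cones with the angular condition $\cos\alpha\geq 3/4$, a guaranteed Poisson point in each annular sector $\Gamma_j\cap(B_{t/2}(x)\setminus B_{2\mu}(x))$ whose mark discrepancy is absorbed uniformly by $\mu$, and a void-probability union bound; the paper merely phrases the geometric step as the containment $\mathbb{H}^{\rho_i}_{(x_j,m_j)}(\hx)\cap{\cal K}_j\subseteq B_{2\|x_j\|}(x)$ rather than as your law-of-cosines exclusion criterion, which is the same computation read in the other direction.
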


\begin{proof} 
We need to prove \eqref{eq:stab} for all $\hx \in \eta$. Without loss of generality, we may assume that $\hx$ is the origin $\hat{\0} := (\0, m_\0)$ and we denote $D:=D_{\hat\0}$.

Let ${\cal K}_j$, $j=1,\dots,J$, be a collection of convex cones in $\R^d$ such that $\cup_{j=1}^J {\cal K}_j = \R^d$
and $\langle x,y\rangle \geq 3\|x\| \|y\| /4$ for any $x$ and $y$ from the same cone ${\cal K}_j$. Each cone has an apex at the origin $\0$. Denote $\hat{{\cal K}}_j:= {\cal K}_j \times \MM$.
We take $(x_j,m_j) \in \etam \cap \hat{\cal K}_j \cap \hat{B}_{2\mu}(\0)^c$ so that $x_j$ is closer to $\0$ than any other point from $\etam \cap \hat{\cal K}_j \cap
\hat{B}_{2\mu}(\0)^c$.
This condition means that the balls $B_{m_\0}(\0)$ and $B_{m_j}(x_j)$ do not overlap. Then
\[
C^{\rho_i}(\hat\0,\etam) \subseteq \bigcap_{j=1}^J \mathbb{H}_{(x_j,m_j)}^{\rho_i}(\hat\0), \quad i = 1,2,3.
\]
Therefore, it is sufficient to find $D$ such that for all $i = 1,2, 3$, we have $\mathbb{H}_{(x_j,m_j)}^{\rho_i}(\hat\0) \cap {\cal K}_j \subseteq B_D(\0)$ for $j=1,\dots,J$
to obtain $C^{\rho_i}(\hat\0,\etam) \subseteq B_D(\0)$.
Consider $y \in \mathbb{H}_{(x_j,m_j)}^{\rho_i}(\hat\0) \cap {\cal K}_j$. Then $\rho_i(y,\hat\0) \leq \rho_i(y,(x_j,m_j))$
and $\langle y,x_j\rangle \geq 3\|x_j\| \|y\| /4$.
For the Laguerre cell the first condition means that $\|y\|^2 - m_{\0}^2 \leq \|y - x_j\|^2 - m_j^2 = \|y\|^2 + \|x_j\|^2 - 2\langle y,x_j \rangle - m_j^2$. Thus
\[
2\langle y,x_j \rangle \le \|x_j\|^2+m_\0^2-m_j^2 \le \|x_j\|^2+\mu^2 < \frac32 \|x_j\|^2
\]
and so $\|y\| < \|x_j\|$.
For the Johnson--Mehl cell we have
\[
\|y-x_j\| \ge \|y\| - m_\0 + m_j \ge \|y\| - \mu,
\]
which for $\|y\| > \mu$ gives
\[
2\langle y,x_j\rangle \le 2\mu\|y\|-\mu^2+\|x_j\|^2.
\]
Hence, using the assumptions $\langle x_j,y\rangle \geq 3\|x_j\| \|y\| /4$ and $\|x_j\|>2\mu$,
\[
\|y\| \le \frac{2(\|x_j\|^2-\mu^2)}{3\|x_j\|-4\mu} < \frac{2\|x_j\|^2}{\|x_j\|} = 2\|x_j\|.
\]
Consequently, for either the Laguerre or Johnson--Mehl cells,  we can take
\begin{equation} \label{disD}
D=2\max_{j=1,\dots,J} \|x_j\|.
\end{equation}

Then, for $t \in (4\mu, \infty)$ we have
\begin{align*}
\PP(D \ge t) &\le \sum_{j=1}^J \PP(2\|x_j\| \ge t)
= \sum_{j=1}^J \PP(\eta \cap (\hat{B}_{t/2}(\0) \setminus \hat{B}_{2\mu}(\0)) \cap \hat{{\cal K}}_j = \emptyset) \\
&= \sum_{j=1}^J \exp(-\Vol((\hat{B}_{t/2}(\0) \setminus \hat{B}_{2\mu}(\0)) \cap \hat{{\cal K}}_j )) \leq  c_{diam} \exp\left(-\frac{1}{c_{diam}}\, t^d\right)
\end{align*}
for some $c_{diam} := c_{diam}(d, \mu) \in (0, \infty)$ depending on $d$ and $\mu$.
This shows Proposition \ref{lemm:radius} for $i = 2, 3$ and hence for $i = 1$ as well.    
\end{proof}

\begin{prop} \label{prop:stab} 
For all $i = 1, 2, 3$ the score $\xi^{\rho_i}$ defined at \eqref{repre} is exponentially stabilizing with respect to $\etam$.
\end{prop}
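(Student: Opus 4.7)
The plan is to take $R_{\hx} := 2 D_{\hx} + \mu$, where $D_{\hx}$ is the diameter bound furnished by Proposition~\ref{lemm:radius} (the case $\mu = 0$ recovers the Voronoi setting). Since $D_{\hx}$ already satisfies a tail bound of the form $\exp(-t^d/c_{diam})$, this choice automatically yields an exponential tail for $R_{\hx}$ with stabilization exponent $\alpha = d$; so the whole proof reduces to verifying the stabilization identity itself.

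By stationarity I may take $\hx = \hat{\0}$; write $D = D_{\hat{\0}}$ and $R = 2D + \mu$. Since $\xi^{\rho_i}$ depends on its second argument only through the cell, for any $\mathcal{A} \subseteq \hR$ with $\text{card}(\mathcal{A}) \le 7$ I must show
\begin{equation*}
C^{\rho_i}(\hat{\0},(\eta \cup \mathcal{A}) \cap \hat{B}_R(\0)) = C^{\rho_i}(\hat{\0}, \eta \cup \mathcal{A}).
\end{equation*}
Each cell is an intersection of the half-regions $\mathbb{H}_{\hz}^{\rho_i}(\hat{\0})$, so enlarging the configuration only shrinks the cell; hence the ``big'' cell on the right is contained in the ``restricted'' cell on the left, and only the reverse inclusion needs proof. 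I split this into two sub-claims: \textbf{(A)} the restricted cell is contained in $B_D(\0)$; and \textbf{(B)} for every $\hz = (z,m_z)$ with $\|z\| \ge R$ and $m_z \in [0,\mu]$, one has $B_D(\0) \subseteq \mathbb{H}_{\hz}^{\rho_i}(\hat{\0})$. Granting both, any point in the restricted cell already lies in every half-region supplied by a point outside $\hat{B}_R(\0)$, and therefore satisfies every constraint defining the big cell.

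For \textbf{(A)} the cone-anchors $(x_j, m_j)$ used in the proof of Proposition~\ref{lemm:radius} satisfy $\|x_j\| \le D/2 \le R$, so they already belong to $\hat{B}_R(\0)$; rerunning that covering argument with $\eta$ replaced by $\eta \cap \hat{B}_R(\0)$ gives $C^{\rho_i}(\hat{\0}, \eta \cap \hat{B}_R(\0)) \subseteq B_D(\0)$, and enlarging this subconfiguration by $\mathcal{A}$ can only shrink the cell further. For \textbf{(B)} the estimate is purely geometric and case-by-case: for $\rho_1$ it is the familiar perpendicular-bisector bound $\|z\| \ge 2D$; for $\rho_2$ one bounds $\langle y, z\rangle \le D\|z\|$ by Cauchy--Schwarz and uses $|m_z^2 - m_{\hat 0}^2| \le \mu^2$ to solve the resulting quadratic in $\|z\|$; for $\rho_3$ one combines $\|y-z\| \ge \|z\| - D$ with $m_z - m_{\hat 0} \le \mu$. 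In each case the threshold $R = 2D + \mu$ is exactly what the computation requires. The only nontrivial step is this case-by-case verification of \textbf{(B)}; it is elementary algebra, so I expect no real obstacle beyond careful bookkeeping of the mark-dependent constants.
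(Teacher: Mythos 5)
Your proposal is correct and follows essentially the same route as the paper: the same radius $R_{\hx} = 2D_{\hx} + \mu$, the same key geometric claim that $B_D(\0) \subseteq \mathbb{H}_{\hz}^{\rho_i}(\hat\0)$ for $\|z\| \ge R$, and the same case-by-case verification for $\rho_2$ and $\rho_3$ (with $\rho_1$ as the $\mu=0$ special case). Your explicit sub-claim \textbf{(A)}, noting that the cone anchors $x_j$ lie in $B_{D/2}(\0) \subseteq B_R(\0)$ so the restricted cell is still contained in $B_D(\0)$, is a step the paper leaves implicit, and your sketched estimates in \textbf{(B)} match the paper's computations.
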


\begin{proof}
 We will prove \eqref{stabdef} when $\hx$ is the origin and we denote $R:=R_{\hat\0}$. For simplicity of exposition, we prove \eqref{stabdef} when $\mathcal{A}$ is the empty set, as the arguments do not change otherwise.
By \eqref{repre}, it is enough to show that there is an almost surely finite random variable $R$ such that
\begin{equation*}
C^{\rho_i}(\hat\0, \eta \cap \hat{B}_R(\0)) = C^{\rho_i}(\hat\0, (\eta \cap \hat{B}_R(\0)) \cup \{(z,m_z)\}) \quad \text{a.s.},
\end{equation*}
whenever $\|z\| \in (R, \infty)$.
To see this we put $R:= 2D + \mu$, where $D$ is at \eqref{disD}.
Given $\hat{z}:=(z,m_z)$, with $\|z\| \in (R, \infty)$, we assert that
$$
B_D(\0) \subseteq \mathbb{H}_{\hz}^{\rho_i}(\hat\0).$$
To prove this, we take any point $y \in B_D(\0)$ and show that
\begin{equation}\label{distance}
\rho_i(y, \hat{\0}) \leq \rho_i(y, \hat{z}), \quad i = 1,2,3.
\end{equation}
Note that $y \in B_D(\0)$ implies $\|y-z\| \in (D+\mu, \infty)$. The proof of \eqref{distance} is shown for the Laguerre and Johnson--Mehl cases individually. First, assume that $C^{\rho_2}(\hat\0, \eta)$ is the cell in the Laguerre tessellation.
Then
\[
\rho_2(y, \hat{\0}) = \|y\|^2-m_\0^2 \le D^2 < (D+\mu)^2-\mu^2 < \|y-z\|^2 - \mu^2 \le \|y-z\|^2 - m_z^2 = \rho_2(y, \hat{z}),
\]
showing that $y \in \mathbb{H}_{\hz}^{\rho_2}(\hat\0)$.
For the Johnson--Mehl case,
\[
\rho_3(y, \hat{\0})=\|y\|-m_\0 \le D = (D+\mu)-\mu < \|y-z\| - \mu \le \|y-z\| - m_z =\rho_3(y, \hat{z}),
\]
thus again $y \in \mathbb{H}_{\hz}^{\rho_3}(\hat\0)$, which shows our assertion.

The radius $D$ at \eqref{disD} has a tail decaying exponentially fast,  showing that
$R$ also has the same property. Consequently, for all $i = 1,2,3$, the score $\xi^{\rho_i}$ is exponentially stabilizing with respect to $\etam$.
\end{proof}

\vskip.3cm

\noindent{\em Remarks.}
(i) The assertion $C^{\rho_i}(\hat\0, \Pm) \subseteq B_D(\0)$ holds for a larger class of marked point processes. We only need that the unmarked point process has at least
one  point in each cone  ${\cal K}_j \cap B_{2\mu}(\0)^c$, $j=1,\dots,J$,  with probability $1$. Consequently, scores $\xi^{\rho_i}, i = 1,2,3,$ are stabilizing with respect to such marked point processes.

\noindent (ii) Proposition \ref{prop:stab} implies that the limit theory developed in  \cite{MGY,PY1,PY4} for the total edge length and related stabilizing functionals of the Poisson--Voronoi tessellation extends to Poisson tessellation models with weighted Voronoi cells. Thus Proposition \ref{prop:stab} provides expectation and variance asymptotics, as well as normal convergence, for such functionals of the Poisson tessellation.

\noindent (iii)
Aside from weighted Voronoi tessellations, Propositions \ref{lemm:radius} and \ref{prop:stab} hold also for the Delaunay triangulation. On the other hand, Proposition \ref{lemm:radius} holds for Poisson-line tessellation, but Proposition \ref{prop:stab} does not.

\section{Proofs of the main results}

\noindent{\bf Preliminary lemmas.} In this section, we omit in the notation the dependence on the weight  $\rho$ that defines the tessellation. For simplicity, we write
$$
H_{\lambda}(\eta \cap \hW_\la ):= H_{\lambda}^{\rho}(\eta \cap \hW_\la ), \quad H_{\lambda}(\eta ):= H_{\lambda}^{\rho}(\eta),
$$
as well as
$$\hat{H}_{\lambda}(\eta \cap \hW_\la ) := \hat{H}_{\lambda}^{\rho}(\eta \cap \hW_\la ), \quad \hat{H}_{\lambda}(\eta ):= \hat{H}_{\lambda}^{\rho}(\eta ).
$$
Let us start with some useful first order results.

\begin{lem}\label{L5.1}
Under the assumptions of Theorem \ref{unbias}(ii), we have
$$\lim_{\la \to \infty} \la\,\E \left| H_{\lambda}(\eta \cap \hW_\la ) -  \hat{H}_{\lambda}(\eta\cap \hW_\la) \right| = 0.$$
\end{lem}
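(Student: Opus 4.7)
The plan is to show that $\E\bigl|H_\la(\eta\cap\hW_\la) - \hat H_\la(\eta\cap\hW_\la)\bigr|$ already decays exponentially in $\la$, so that multiplication by $\la$ does not spoil the limit.

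First, by the definitions of $H_\la$ and $\hat H_\la$,
$$H_\la(\eta\cap\hW_\la) - \hat H_\la(\eta\cap\hW_\la) = \sum_{\hx\in\eta\cap\hW_\la} \frac{h(\C(\hx,\eta))\,\1\{\C(\hx,\eta)\subseteq W_\la\}}{\Vol(W_\la\ominus\C(\hx,\eta))}\,\1\{\Vol(W_\la\ominus\C(\hx,\eta)) < \la/2\}.$$
I take absolute values and apply the Mecke formula for the unit-intensity marked Poisson process $\eta$. For a fixed mark $m\in\MM$, set $K_m := C^\rho_{\0}((\0,m),\eta\cup\{(\0,m)\})$. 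Translation invariance of $\eta$ and of $h$ gives, after recentering the cell at $\0$, that $\1\{\C(\hx,\eta\cup\{\hx\})\subseteq W_\la\}$ becomes $\1\{x\in W_\la\ominus K_m\}$, while $|h(\C)|$ and $\Vol(W_\la\ominus\C)$ become the $x$-free quantities $|h(K_m)|$ and $\Vol(W_\la\ominus K_m)$. Since $\int_{W_\la}\1\{x\in W_\la\ominus K_m\}\,\dint x \leq \Vol(W_\la\ominus K_m)$, the erosion volume cancels the denominator, and I obtain
$$\E\bigl|H_\la(\eta\cap\hW_\la) - \hat H_\la(\eta\cap\hW_\la)\bigr| \leq \int_{\MM}\E\Bigl[|h(K_m)|\,\1\{\Vol(W_\la\ominus K_m) < \la/2\}\Bigr]\,\Q_\MM(\dint m).$$

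Second, the exponentially decaying diameter assumption (Definition \ref{defD}) yields $K_m\subseteq B_{D_m}(\0)$ with $\PP(D_m\geq t)\leq c_{diam}\exp(-t^d/c_{diam})$. Consequently $\Vol(W_\la\ominus K_m)\geq (\la^{1/d}-2D_m)_+^d$, so the event $\{\Vol(W_\la\ominus K_m) < \la/2\}$ forces $D_m > c_d\,\la^{1/d}$ with $c_d:=(1-2^{-1/d})/2 > 0$, and
$$\PP\bigl(\Vol(W_\la\ominus K_m) < \la/2\bigr) \leq c_{diam}\exp\bigl(-c_d^d\,\la/c_{diam}\bigr),$$
uniformly in $m\in\MM$.

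Third, because $K_m$ is almost surely bounded, $h(K_m) = \xi^\rho((\0,m),\eta\cup\{(\0,m)\})$ a.s., and the $p$-moment condition \eqref{mom} supplies $\E|h(K_m)|^p \leq C_p < \infty$ uniformly in $m$. Hölder's inequality with exponent $p>1$ and conjugate $q$ then yields
$$\E\bigl|H_\la(\eta\cap\hW_\la) - \hat H_\la(\eta\cap\hW_\la)\bigr| \leq C_p^{1/p}\bigl(c_{diam}\exp(-c_d^d\,\la/c_{diam})\bigr)^{1/q},$$
which is exponentially small in $\la$, so multiplication by $\la$ yields the claim.

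The main technical obstacle is the first step: the Mecke-formula rewriting combined with the translation invariance is precisely what engineers the cancellation of $\Vol(W_\la\ominus\C)$ against the containment indicator, thereby removing the otherwise dangerous small denominator; once that cancellation is in hand, the remaining estimate is a routine tail bound powered by Definition \ref{defD} and Hölder.
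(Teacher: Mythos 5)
Your proposal is correct and follows essentially the same route as the paper: Mecke/Slivnyak formula plus translation invariance to recenter the cell, cancellation of the erosion volume against the integrated containment indicator, H\"older's inequality against the $p$-moment condition, and the diameter tail bound to show $\PP(\Vol(W_\la\ominus K_m)<\la/2)$ decays exponentially (your constant $c_d\,\la^{1/d}$ is exactly the paper's $e(\la)=(\la^{1/d}-(\la/2)^{1/d})/2$). No substantive differences.
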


\begin{proof}
We denote by $\hat\Q$ the product of the Lebesgue measure on $\R^d$ and $\Q_\MM$.
By the Slivnyak--Mecke theorem \cite[Corollary 3.2.3]{SW08} and stationarity,
\begin{align*}
&\E \left|  H_{\lambda}(\eta \cap \hW_\la ) -  \hat{H}_{\lambda}(\eta\cap \hW_\la) \right| \\
&\leq \E \sum_{\hx \in \eta \cap \hW_\la} \frac{  |h(C(\hx,\eta))|}{\Vol(W_\la \ominus C(\hx,\eta))}\,\1\{C(\hx,\eta) \subseteq W_\la\}\,
\1\{\Vol(W_\la \ominus C(\hx, \eta) ) < \frac{\la} {2} \} \\
&= \int_{\hW_\la} \E \left( \frac{  |h(C(\hx,\eta))|}{\Vol(W_\la \ominus C(\hx,\eta))}\,\1\{C(\hx,\eta) \subseteq W_\la\}\,
\1\{\Vol(W_\la \ominus C(\hx, \eta) ) <  \frac{\la} {2} \}\right) \,\hat\Q(\dint \hx) \\
&= \int_{W_\la}\int_{\MM} \E \Biggl( \frac{ |h(C((\0,m),\eta))|}{\Vol(W_\la \ominus C((\0,m),\eta))}\,\1\{x \in W_\la \ominus C((\0,m),\eta)\} \\
&\hspace{3cm}\times \1\{\Vol(W_\la \ominus C((\0,m), \eta) ) <  \frac{\la} {2} \}\Biggr) \,\Q_\MM(\dint m)\,\dint x.
\end{align*}
Changing the order of integration we get
\begin{align}\label{eq5.1}
\E \left|  H_{\lambda}(\eta \cap \hW_\la ) -  \hat{H}_{\lambda}(\eta\cap \hW_\la) \right| &\leq \int_{\MM} \E \bigg( |h(C(\0_m,\eta))|
\1\{ \Vol( W_\la \ominus  C(\0_m,\eta)) <  \frac{\la} {2} \} \nonumber\\
&\quad\times \int_{W_\la}
\frac{\1\{ x \in W_\la \ominus C(\0_m,\eta) \}}{\Vol(W_\la \ominus C(\0_m,\eta))} \,\dint x\bigg)\,\Q_\MM(\dint m),
\end{align}
where $\0_m := (\0,m)$.
The inner integral over $W_\la$  is bounded by one, showing that for all $p \in (1, \infty)$ we have
\begin{align*}
& \E \left|  H_{\lambda}(\eta \cap \hW_\la ) -  \hat{H}_{\lambda}(\eta\cap \hW_\la) \right|\\
&\quad\leq \int_{\MM} \E \left( |h(C((\0,m),\eta))|\,\1\{ \Vol( W_\la \ominus  C((\0,m),\eta)) <  \frac{\la} {2} \}\right)\,\Q_\MM(\dint m)  \\
 &\quad\leq  \int_{\MM} (\E |h(C((\0,m),\eta))|^{p})^{\frac{1}{p}} \, \left( \PP( \Vol( W_\la \ominus  C((\0,m),\eta)) <  \frac{\la} {2} \right)^{\frac{p-1}{p}} \,\Q_\MM(\dint m).
\end{align*}
The random variable $D$ at \eqref{disD}  satisfies $C(\hat\0,\etam) \subseteq B_D(\0)$ a.s. Thus,
$$
\PP\left( \Vol( W_\la \ominus  C(\hat\0,\etam) )  <  \frac{\la} {2} \right) \leq \PP\left( \Vol( W_\la \ominus  B_D(\0) ) <  \frac{\la} {2} \right).
$$
The volume of the erosion in the right hand side equals $(\la^{1/d}-2D)_+ ^d$. By conditioning on $Y := \1\{\la^{1/d}\geq 2D\}$, we obtain
\begin{align*}
\PP \left((\la^{1/d}-2D)_+ ^d <  \frac{\la} {2} \right) 
&= \PP\left((\la^{1/d}-2D)_+ ^d  <  \frac{\la} {2} | Y=1\right)\, \PP(Y=1) \\
&\quad + \PP\left((\la^{1/d}-2D)_+ ^d  <  \frac{\la} {2} |Y=0\right)\, \PP(Y=0)\\
& \leq \PP\left((\la^{1/d}-2D) ^d  <  \frac{\la} {2} \right) + \PP(\la^{1/d}< 2D)\\
& \leq 2 \PP (D > e(\la)),
\end{align*}
where $e(\la):=(\la^{1/d} - (\la/2)^{1/d})/2$. Finally, recalling  that $D$ has exponentially decaying tails as at \eqref{eq:stab}, we obtain
$$\PP\left( \Vol( W_\la \ominus  C(\hat\0,\etam) )  <  \frac{\la} {2} \right) \le 2 \, c_{diam} \exp\left(-\frac{1}{c_{diam}}\, e(\la)^d \right).$$

Using this bound we have
\begin{align*}
& \la\,\E \left|  H_{\lambda}(\eta \cap \hW_\la ) -  \hat{H}_{\lambda}(\eta\cap \hW_\la) \right|\\
& \leq  \la\int_{\MM} (\E |h(C((\0,m),\eta))|^{p})^{\frac{1}{p}} \left( 2\, c_{diam} \exp\left(- \frac{1}{c_{diam}}\, e(\lambda)^d\right)\right)^{\frac{p-1}{p}}\,\Q_\MM(\dint m).
\end{align*}
Now $\xi$ satisfies the $p$-moment condition for $p \in (1, \infty)$ and so Lemma \ref{L5.1} follows. 
\end{proof}

\begin{lem}\label{L5.2}
Under the assumptions of Theorem \ref{unbias}(ii), we have
$$\lim_{\la \to \infty} \la\,\E \left|  H_{\lambda}(\eta) -  \hat{H}_{\lambda}(\eta) \right| = 0.$$
\end{lem}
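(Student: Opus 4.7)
The plan is to mirror the proof of Lemma \ref{L5.1}, with the one crucial modification that the Slivnyak--Mecke formula is applied over the whole space $\hR$ rather than over $\hW_\la$. First I would write $H_\la(\eta) - \hat{H}_\la(\eta)$ as a sum over $\hx \in \eta$ whose summands are supported on the event $\{C(\hx,\eta) \subseteq W_\la\} \cap \{\Vol(W_\la \ominus C(\hx,\eta)) < \la/2\}$, then apply Slivnyak--Mecke and stationarity in the same way as in Lemma \ref{L5.1}. This produces the analogue of the double integral displayed there, except that the outer integration in $x$ now runs over $\R^d$ instead of $W_\la$.

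Next I would swap the order of integration. By the definition of erosion, $\int_{\R^d}\1\{x \in W_\la \ominus C(\0_m,\eta)\}\,\dint x = \Vol(W_\la \ominus C(\0_m,\eta))$, and on the event that this volume vanishes the original summand vanishes as well, so no contribution is lost. Consequently the inner integral in $x$ equals exactly $1$ (rather than being merely bounded by $1$ as in Lemma \ref{L5.1}), and one arrives at precisely the same intermediate estimate
\begin{align*}
\E\,\bigl|H_\la(\eta) - \hat{H}_\la(\eta)\bigr| \le \int_{\MM} \E\Bigl( |h(C(\0_m,\eta))|\,\1\{\Vol(W_\la \ominus C(\0_m,\eta)) < \tfrac{\la}{2}\}\Bigr)\,\Q_\MM(\dint m)
\end{align*}
that appears in the proof of Lemma \ref{L5.1} immediately after \eqref{eq5.1}.

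From this point on the argument is identical to that of Lemma \ref{L5.1}: H\"older's inequality with exponent $p \in (1,\infty)$ coming from the moment condition \eqref{mom}, together with the containment $C(\hat\0,\eta) \subseteq B_D(\0)$ and the exponential tail estimate on $D$ furnished by the hypothesis of Theorem \ref{unbias}(ii), yields a bound of the form $\PP(\Vol(W_\la \ominus C(\0_m,\eta)) < \la/2) \le 2c_{diam}\exp(-c_{diam}^{-1} e(\la)^d)$, whose super-polynomial decay absorbs the factor $\la$ in the limit. The only genuinely new point compared to Lemma \ref{L5.1} is the Fubini/erosion identity in the second paragraph; since it is just the standard identity $\int_{\R^d}\1\{x + B \subseteq W_\la\}\,\dint x = \Vol(W_\la \ominus B)$ applied pathwise to $B = C(\0_m,\eta)$, I do not expect it to be a serious obstacle.
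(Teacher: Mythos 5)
Your proposal is correct and coincides with the paper's own argument: the paper's proof of Lemma \ref{L5.2} simply says to repeat the proof of Lemma \ref{L5.1} with the outer integration in \eqref{eq5.1} taken over $\R^d$ instead of $W_\la$, so that the inner integral equals one by the erosion identity, and then to proceed verbatim. Your write-up just spells out the same steps in more detail.
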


\begin{proof} We follow the proof of Lemma \ref{L5.1}. In \eqref{eq5.1}, we integrate over $\R^d$ instead of over $W_\la$,
yielding a value of one for the inner integral.  Now follow the proof of Lemma \ref{L5.1} verbatim.  
\end{proof}

\begin{lem}\label{L5.3}
Under the assumptions of Theorem \ref{unbias}(ii), we have
$$\lim_{\la \to \infty} \E \left|  \hat{H}_{\lambda}(\eta\cap \hW_\la) -  \hat{H}_{\lambda}(\eta) \right| = 0.$$
\end{lem}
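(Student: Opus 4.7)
The plan is to mimic the arguments of Lemmas \ref{L5.1} and \ref{L5.2}, but the summation in the difference now runs only over marked atoms $\hx = (x,m_x) \in \eta$ with $x \notin W_\la$. This outer-shell restriction, combined with the exponentially decaying diameter of the cells, will produce a decay factor that compensates for the factor $1/\la$ missing from the statement of Lemma \ref{L5.3} (compared with Lemmas \ref{L5.1}--\ref{L5.2}).

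First I would write
\begin{equation*}
\hat H_\la(\eta) - \hat H_\la(\eta \cap \hW_\la) = \sum_{\hx \in \eta,\, x \notin W_\la} \frac{h(\C(\hx,\eta))\,\1\{\C(\hx,\eta) \subseteq W_\la\}}{\Vol(W_\la \ominus \C(\hx,\eta))}\,\1\{\Vol(W_\la \ominus \C(\hx,\eta)) \geq \tfrac{\la}{2}\}
\end{equation*}
and use the volume indicator to replace the denominator by $\la/2$ from below. Applying the Slivnyak--Mecke theorem together with the translation invariance of $h$ and the stationarity of $\eta$ (as in the proof of Lemma \ref{L5.1}) then gives
\begin{equation*}
\E \bigl| \hat H_\la(\eta) - \hat H_\la(\eta \cap \hW_\la) \bigr| \le \frac{2}{\la} \int_\MM \int_{\R^d \setminus W_\la} \E\bigl[|h(C^\rho_\0(\0_m, \eta))|\,\1\{x + C^\rho_\0(\0_m, \eta) \subseteq W_\la\}\bigr]\,\dint x\,\Q_\MM(\dint m),
\end{equation*}
where $\0_m := (\0,m)$ and $C^\rho_\0(\0_m, \eta)$ denotes, following the paper's convention, the cell of $\0_m$ in the configuration $\eta \cup \{\0_m\}$, placed at the origin.

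The key geometric step is that, by Proposition \ref{lemm:radius}, $C^\rho_\0(\0_m, \eta) \subseteq B_{D_{\0_m}}(\0)$ almost surely, so the containment $x + C^\rho_\0(\0_m, \eta) \subseteq W_\la$ forces $B_{D_{\0_m}}(\0)$ to meet $W_\la - x$ and hence forces $D_{\0_m} \ge \dist(x, W_\la)$. Applying H\"older's inequality with some $p \in (1,\infty)$ for which the $p$-moment condition \eqref{mom} holds, together with the exponential tail \eqref{eq:stab} of $D_{\0_m}$ (uniformly in $m \in \MM = [0,\mu]$, since the constant $c_{diam}$ depends only on $d$ and $\mu$), one obtains a bound of the form
\begin{equation*}
\E\bigl[|h(C^\rho_\0(\0_m, \eta))|\,\1\{x + C^\rho_\0(\0_m, \eta) \subseteq W_\la\}\bigr] \leq K \exp\bigl(-c\,\dist(x, W_\la)^d\bigr)
\end{equation*}
for constants $K, c \in (0, \infty)$ independent of $\la$ and $m$.

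Integrating this bound over $x \in \R^d \setminus W_\la$ in a normal coordinate around $\partial W_\la$ shows that the resulting integral is of order $\la^{(d-1)/d}$: the $(d-1)$-dimensional boundary measure of $W_\la$ times the finite integral $\int_0^\infty e^{-c t^d}\, \dint t$. Dividing by $\la$ yields $O(\la^{-1/d}) \to 0$ as $\la \to \infty$, finishing the proof. The main obstacle is the geometric observation that turns the containment event into a tail event for $D_{\0_m}$; once this is in place, the remaining estimates are straightforward adaptations of those in Lemmas \ref{L5.1} and \ref{L5.2}.
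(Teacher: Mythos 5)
Your proposal is correct and follows essentially the same route as the paper: the paper likewise restricts to atoms outside $\hW_\la$, inserts the indicator $\1\{D_{\hx} \ge d(x,W_\la)\}$ (your geometric observation that containment of the cell in $W_\la$ forces $D_{\hx}\ge\dist(x,W_\la)$), bounds each term via H\"older, the $p$-moment condition and the $\la/2$ volume indicator to get $(c/\la)\exp(-d(x,W_\la)^d/c)$, and integrates over $W_\la^c$ by the co-area formula to conclude $O(\la^{-1/d})$. No gaps.
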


\begin{proof} Write
\begin{align} \label{hnu}
\hnu_\la(\hx, \etam) & :=   \frac{ h(C( \hx,  \eta)) \,
 \1\{ C(\hx, \eta) \subseteq W_\la \}}  { \Vol( W_\la \ominus C(\hx,  \eta) ) }\, \\ \nonumber
&  \ \ \ \ \ \ \ \times  \1\{ \Vol( W_\la \ominus C( \hx, \eta) ) \geq  \frac{\la} {2}  \}
\,\1\{D_{\hx} \geq d(x, W_\la)\},
\end{align}
where $D_{\hx}$ is the radius of the ball centered at $x$ and containing $C(\hx, \etam)$
and where $D_{\hx}$ is equal in distribution to $D$, with $D$ at \eqref{disD}.  Here $d(x, W_\la)$ denotes the Euclidean distance between $x$ and $W_\la$.
We observe that
$$\E \left|  \hat{H}_{\lambda}(\eta\cap \hW_\la) -  \hat{H}_{\lambda}(\eta) \right| \leq \E  \sum_{\hx \in \eta \cap \hW_\la ^c} \left| \hnu_\la(\hx, \etam) \right|.$$

From now on, we use the notation $c$ to denote a universal positive constant whose value may change from line to line.
By the H\"older inequality, the $p$-moment condition on $\xi$, and Proposition \ref{lemm:radius} we have $\E |\hnu_\la(\hx, \etam)| \leq (c/\la)  \exp\left( - \frac{1}{c} d(x, W_\la)^d \right)$.
Thus
$$
\E \left|  \hat{H}_{\lambda}(\eta\cap \hW_\la) -  \hat{H}_{\lambda}(\eta) \right| \leq \frac{c}{\la}  \int_{W_\la^c} \exp\left( - \frac{1}{c} \, d(x, W_\la)^d \right)\,\dint x.
$$
Let $W_{\la,\eps}$ be the set of points in $W_\la^c$ at distance $\eps$ from $W_\la$.
The co-area formula implies
$$
\E \left|  \hat{H}_{\lambda}(\eta\cap \hW_\la) -  \hat{H}_{\lambda}(\eta) \right| \leq \frac{c}{\la} \int_0^{\infty} \int_{W_{\la,\eps}} \exp\left( - \frac{1}{c} \, \eps^d \right)\, {\cal H}^{d-1}(\dint y)\,\dint \eps.
$$
Since ${\cal H}^{d-1} ( W_{\la,\eps}) \leq c\,(\la^{1/d}(1 + \eps) )^{d-1}$, we get $\E \left|  \hat{H}_{\lambda}(\eta\cap \hW_\la) -  \hat{H}_{\lambda}(\eta) \right| = O(\la^{-1/d}).$
\end{proof}

\noindent{\bf Proof of Theorem \ref{unbias}.} (i) We have
\begin{align*}
\E H_{\lambda}(\Pm) &=  \E  \sum_{\hx \in \Pm}   \frac{  h(C(\hx, \Pm)) }  { \Vol( W_\la \ominus C(\hx, \Pm) ) }\,  \1\{ C(\hx,\Pm) \subseteq W_\la \}  \\
&=  \E  \sum_{\hx \in \Pm}  \frac{ h(C_{\0}(\hx, \Pm) ) }  { \Vol( W_\la \ominus C_{\0}(\hx, \Pm) ) } \, \1\{ x + C_{\0}(\hx,\Pm) \subseteq W_\la \}  \\
&= \int_{\R^d} \E   \left(   \frac{  h( K_{\0}^\rho  ) }  { \Vol( W_\la \ominus  K_{\0}^{\rho} ) }\,  \1\{ x + K_{\0}^{\rho}  \subseteq W_\la \}  \right)  \,\dint x \\
&=  \E \int_{\R^d}    \left(   \frac{  h( K_{\0}^\rho   ) }  { \Vol( W_\la \ominus  K_{\0}^\rho  ) }\,  \1\{ x \in   W_\la  \ominus K_{\0}^\rho \}  \right) \,\dint x \\
& = \E h(  K_{\0}^\rho ),
\end{align*}
where we use translation invariance of $h$, translation invariance of erosions, Campbell's theorem for stationary particle processes \cite[Theorem 2.41]{BR} or \cite[Section 4.1]{SW08}, and Fubini's theorem in this order.
Hence, we have shown the unbiasedness $H_{\lambda}(\Pm)$.

\vskip.3cm

\noindent (ii) The asymptotic unbiasedness of $H_{\lambda}(\eta \cap \hW_\la)$, $\hat{H}_{\lambda}(\eta \cap \hW_\la )$ and $\hat{H}_{\lambda}(\eta)$ is a consequence of Lemmas \ref{L5.1}, \ref{L5.2} and \ref{L5.3}. For example, concerning $H_{\lambda}(\eta \cap \hW_\la)$, one may write
\begin{align*}
& |\E H_{\lambda}(\eta \cap \hW_\la )-\E h(K_\0^{\rho} (\eta) )| \leq  \E |H_{\lambda}(\eta \cap \hW_\la ) - H_{\lambda}(\eta)| \\
& \leq  \left( \E |H_{\lambda}(\eta \cap \hW_\la ) - \hat{H}_{\lambda}(\eta \cap \hW_\la )| + \E |\hat{H}_{\lambda}(\eta \cap \hW_\la ) - \hat{H}_{\lambda}(\eta)| + \E |\hat{H}_{\lambda}(\eta) - H_{\lambda}(\eta)| \right),
\end{align*}
which in view of  Lemmas \ref{L5.1}, \ref{L5.2} and \ref{L5.3}
goes to zero as $\la \to \infty$.  This gives the asymptotic unbiasedness of $H_{\lambda}(\eta \cap \hW_\la )$. One may similarly show the asymptotic unbiasedness for $\hat{H}_{\lambda}(\eta \cap \hW_\la )$ and $\hat{H}_{\lambda}(\eta)$.

\vskip.3cm

\noindent (iii) To show consistency, we introduce  $T_\la(\eta \cap \hW_\la)= \la^{-1} \sum_{\hx \in \eta \cap \hW_\la} \xi(\hx, \eta)$. By assumption,  $\xi$ stabilizes and satisfies the $p$-moment
condition for $p \in (1, \infty)$. Thus, using Theorem 2.1 of \cite{PY4}, we get that $T_\la(\eta \cap \hW_\la)$ is a consistent estimator of $\E h(K_\0^\rho (\eta) ).$ To prove the consistency of the
estimators in Theorem \ref{unbias}(iii), it is enough to show for one of them that it has the same $L_1$ limit as $T_\la(\eta \cap \hW_\la)$. We choose $ \hat{H}_{\lambda}(\eta \cap \hW_\la )$ and write
\begin{align*}
&\E \left| \hat{H}_{\lambda}(\eta \cap \hW_\la )-T_\la(\eta \cap \hW_\la)\right| \\
&= \E \left|\la^{-1} \sum_{\hx \in \eta \cap \hW_\la} \xi(\hx, \eta) \left(  \frac{   \la\,\1\{ C(\hx, \eta) \subseteq W_\la \}\,\1\{  \Vol( W_\la \ominus C(\hx, \eta) ) \geq  \frac{\la}{2} \} }  { \Vol( W_\la \ominus C(\hx, \eta) ) } -1 \right) \right| \\
&\leq \la^{-1} \E  \sum_{\hx \in \eta \cap \hW_\la} |\xi(\hx, \eta)| \left| \frac{   \la\,\1\{ C(\hx, \eta) \subseteq W_\la \}\,\1\{  \Vol( W_\la \ominus C(\hx, \eta) ) \geq  \frac{\la}{2}
\} }  { \Vol( W_\la \ominus C(\hx, \eta) ) } -1 \right| \\
&\leq \int_{W_\la} \la^{-1} \E \left(   |h(K_\0^\rho  (\eta) )| \left| \frac{   \la\,\1\{ x + K_\0^\rho(\eta) \subseteq W_\la \}\,\1\{  \Vol( W_\la \ominus K_\0^\rho(\eta) ) \geq \frac{\la}{2} \}}  {
\Vol( W_\la \ominus K_\0^\rho (\eta) ) } -1 \right| \right)\,\dint x\\
&= \int_{[- \frac{1} {2}, \frac{1} {2 } ]^d} \E \big( |h(K_\0^\rho (\eta) )| Y_\la(u)\big)\,\dint u,
\end{align*}
where we substituted $\la^{1/d}u$ for $x$ in the last equality and defined random variables
$$
Y_{\la} (u) :=  \left| \frac{   \la\,\1\{ \la^{1/d} u + K_\0^\rho(\eta) \subseteq W_\la \}\,\1\{  \Vol( W_\la \ominus K_\0^\rho(\eta) ) \geq \frac{\la}{2} \}}  { \Vol( W_\la \ominus K_\0^{\rho}(\eta)}  ) -1 \right|.
$$

We show that $Y_{\la} (u)$ converges to zero in probability for any $u \in (-1/2, 1/2)^d$.  Using the inclusion $K_\0^{\rho}(\eta) \subseteq B_D(\0)$ given by  Proposition \ref{lemm:radius} and that $D$ has
exponentially decaying tails,  we conclude that both  $\la/\Vol(W_\la \ominus K_\0^\rho(\eta))$ and  $\1\{  \Vol( W_\la
\ominus K_\0^{\rho}(\eta) ) \geq \la/2  \}$ tend to one in probability. To prove the convergence of $Y_{\la}
(u)$ to zero in probability, it remains to show that $\1\{ \la^{1/d} u + K_\0^{\rho}(\eta) \subseteq W_\la \}$ converges to one in probability. Equivalently, we show that the probability of the event $\{\la^{1/d} u + K_\0^{\rho}(\eta) \subseteq W_\la \}$ goes to 1. Let $u \in (-1/2, 1/2)^d$ be fixed. Then
\begin{align*}
&\PP(\la^{1/d}u \in W_\la \ominus K_\0^\rho(\eta) ) \geq \PP(\la^{1/d}u \in W_\la \ominus B_D(\0)) \\
& = \PP \left(u \in \left[-\frac{1}{2} + \frac{D}{\la^{1/d}}, \frac{1}{2} - \frac{D}{\la^{1/d}} \right]^d \right)\\
& =  \PP \left(u \in \left[-\frac{1}{2} + \frac{D}{\la^{1/d}}, \frac{1}{2} - \frac{D}{\la^{1/d}} \right]^d | D\leq \log{\la}\right) \PP(D\leq \log{\la})\\
&\quad + \PP \left(u \in \left[-\frac{1}{2} + \frac{D}{\la^{1/d}}, \frac{1}{2} - \frac{D}{\la^{1/d}} \right]^d | D > \log{\la}\right) \PP(D > \log{\la}) \\
& \geq  \PP \left(u \in \left[-\frac{1}{2} + \frac{\log{\la}}{\la^{1/d}}, \frac{1}{2} - \frac{\log{\la}}{\la^{1/d}} \right]^d \right)\PP(D\leq \log{\la}) \\
&\quad + \PP \left(u \in \left[-\frac{1}{2} + \frac{D}{\la^{1/d}}, \frac{1}{2} - \frac{D}{\la^{1/d}} \right]^d | D > \log{\la}\right) \PP(D > \log{\la}).
\end{align*}
Again, $D$ has exponentially decaying tails, so the lower bound converges to $\PP (u \in (-1/2, 1/2)^d) = 1$, showing that $Y_\la(u)$ goes to zero in probability as $\la \to \infty$.
We proved that $Y_{\la} (u)$ converge to zero in probability, but they are also uniformly bounded by one, hence
it follows from the moment condition on $\xi$ that  $h(K_\0^\rho(\eta))Y_\la(u)$ goes to zero in $L^1$.
Finally, by the  dominated convergence theorem, we get
$$\lim_{\la \to \infty}\E \left| \hat{H}_{\lambda}(\eta \cap \hW_\la )-T_\la(\eta \cap \hW_\la)\right| = 0.$$
Thus $ \hat{H}_{\lambda}(\eta \cap \hW_\la )$ converges to $\E h(K_\0^\rho(\eta))$ in $L^1$ and also in probability. The consistency of the remaining estimators in Theorem \ref{unbias} follows  from Lemmas \ref{L5.1}, \ref{L5.2} and \ref{L5.3}.
This completes the proof of Theorem \ref{unbias}.  \qed

\vskip.5cm

\noindent{\bf Proof of Theorem \ref{Thm1} (i).}
We prove the variance asymptotics \eqref{varlimit}. The proof is split into two lemmas (Lemma \ref{lem:varhatTla} and Lemma
\ref{VarhatstarT}). We first show an auxiliary result used in the proofs of both lemmas. Then we prove the variance asymptotics for $\hat{H}_\la(\etam \cap \hW_\la)$. This is easier, since, after scaling by $\la$, the scores
are bounded by $2 |\xi(\hx,\eta)|$ and thus, by assumption, satisfy a $p$-moment condition for some $p \in (2, \infty)$. Finally, we conclude the proof by showing that the asymptotic variance of $\hat{H}_\la(\etam)$ is the same as the asymptotic variance of $\hat{H}_\la(\etam \cap \hW_\la)$.

\begin{lem}\label{L5}
Let $\varphi: \hR \times {\bf N} \to \R$ be an exponentially stabilizing function with respect to $\eta$ and which satisfies the $p$-moment condition for some $p \in (2, \infty)$. Then there exists a constant
 $c \in (0, \infty)$ such that for all $\hx, \hy \in \hR$
\begin{align} \label{assert}
&|\E \varphi(\hx, \etam \cup \{\hy\})\varphi(\hy, \etam \cup \{\hx\}) - \E \varphi(\hx, \etam)\,\E\varphi(\hy, \etam)|\\
&\qquad\leq {c}\,\left( \sup_{\hx, \hy \in \hR }  \E | \varphi(\hx, \etam  \cup \{ \hy \} )|^{p}\right)^{\frac{2}{p}}
\exp\left(-\frac{1}{c}\, \|x - y\|^{ \alpha }  \right).
\end{align}
\end{lem}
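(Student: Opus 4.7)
The plan is to execute the standard decoupling argument for stabilizing functionals of Poisson input. Set $r := \|x - y\|/2$ so that the balls $\hat{B}_r(x)$ and $\hat{B}_r(y)$ are disjoint. Exponential stabilization of $\varphi$, applied at $\hx$ with respect to $\eta \cup \{\hx\}$ and at $\hy$ with respect to $\eta \cup \{\hy\}$, yields almost surely finite radii $R_{\hx}, R_{\hy}$ with
\[
\PP(R_{\hx} \ge r) \vee \PP(R_{\hy} \ge r) \le c_{stab}\exp(-r^{\alpha}/c_{stab}).
\]
Introduce the localized scores $\varphi_r(\hx) := \varphi(\hx, \eta \cap \hat{B}_r(x))$ and $\varphi_r(\hy) := \varphi(\hy, \eta \cap \hat{B}_r(y))$. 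Because $\hy \notin \hat{B}_r(x)$ and $\mathrm{card}(\{\hy\}) \le 7$, on the event $\{R_{\hx} \le r\}$ the stabilization property forces
\[
\varphi(\hx, \eta) \;=\; \varphi(\hx, \eta \cup \{\hy\}) \;=\; \varphi_r(\hx),
\]
and the analogous identities hold at $\hy$ on $\{R_{\hy} \le r\}$.

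The decisive point is that $\varphi_r(\hx)$ and $\varphi_r(\hy)$ are measurable with respect to $\eta$ restricted to the two disjoint balls, so by independence of a Poisson process on disjoint regions $\E[\varphi_r(\hx)\varphi_r(\hy)] = \E \varphi_r(\hx)\,\E \varphi_r(\hy)$. Using this, I would rewrite the left-hand side of \eqref{assert} as the telescoping sum
\[
\bigl(\E[\varphi(\hx,\eta \cup \{\hy\})\varphi(\hy,\eta \cup \{\hx\})] - \E[\varphi_r(\hx)\varphi_r(\hy)]\bigr) + \bigl(\E \varphi_r(\hx)\,\E \varphi_r(\hy) - \E \varphi(\hx,\eta)\,\E \varphi(\hy,\eta)\bigr),
\]
and decompose each bracket further into expressions of the form $\E[(\text{score difference})\cdot(\text{other score})\,\1\{R_{\hx} > r\}]$ or with $\1\{R_{\hy} > r\}$, exploiting that the corresponding score difference vanishes on the complementary good event.

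To each such term I would apply H\"older's inequality with exponents $(p, p, p/(p-2))$: the two $L^p$-norms are controlled by $2(\sup_{\hx, \hy}\E|\varphi(\hx, \eta \cup \{\hy\})|^p)^{1/p}$ via the $p$-moment hypothesis \eqref{mom}, and the indicator contributes
\[
\PP(R_{\hx} \ge r)^{(p-2)/p} \le c\exp\bigl(-\tfrac{p-2}{p\, c_{stab}}\, r^{\alpha}\bigr).
\]
Substituting $r = \|x-y\|/2$ and absorbing the numerical factors $2^{-\alpha}(p-2)/(p\, c_{stab})$ into a single constant $c$ produces \eqref{assert}. I do not anticipate a substantive obstacle: the only delicate points are to notice that the allowance $\mathrm{card}(\mathcal{A}) \le 7$ in Definition \ref{defR} is precisely what legitimizes inserting the single extra atom $\hy$ (resp.\ $\hx$) on the stabilization event, and that the hypothesis $p>2$ is exactly what keeps the third H\"older exponent $p/(p-2)$ finite.
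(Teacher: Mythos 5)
Your argument is correct and follows essentially the same route as the paper's proof (which itself adapts Lemma 5.2 of \cite{BY05}): localize both scores to balls about $x$ and $y$ on the stabilization event, pay a H\"older penalty of order $\PP(R>r)^{(p-2)/p}$ times $(\sup \E|\varphi|^p)^{2/p}$ on the complementary event, and use Poisson independence over the disjoint balls to factor the localized product before reassembling the covariance. The only cosmetic differences are your choice $r=\|x-y\|/2$ (where the closed balls meet in a single point, which is harmless for Poisson independence) versus the paper's $r=\|x-y\|/3$, and your use of the separate events $\{R_{\hx}>r\}$, $\{R_{\hy}>r\}$ in place of the paper's single event $E=\{\max(R_{\hx},R_{\hy})\le r\}$.
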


\begin{proof}
We follow the proof of Lemma 5.2 in \cite{BY05} and show that the constant $A_{1,1}$ there involves the moment $(\E | \varphi(\hx, \etam \cup \{\hy\})|^p)^{\frac{2}{p}}$. Put $R := \max (R_{\hx}, R_{\hat{y}})$, where $R_{\hx}, R_{\hat{y}}$ are the radii of stabilization as in Proposition \ref{prop:stab} for $\hx$ and $\hat{y}$, respectively. Furthermore, put $r:= \|x-y\|/3$ and
define the event $E := \{ R \leq  r \}$.  H\"older's inequality gives
\begin{align} \label{cov1}
& | \E \varphi(\hx, \etam \cup \{\hy\} ) \varphi(\hy, \etam \cup \{\hx\} ) - \E \varphi(\hx, \etam \cup \{\hy\} ) \varphi(\hy, \etam \cup \{\hx\} )\1\{E\} | \nonumber \\
&\qquad \leq c \left(\sup_{  \hx, \hy \in \hR    } \E | \varphi(\hx, \etam \cup \{\hy\} )|^p\right)^{\frac{2}{p}} \, \PP(E^c)^{\frac{p-2}{p}}.
\end{align}
Notice that
\begin{align*}
& \E \varphi(\hx, \etam \cup \{\hy\} ) \varphi(\hy, \etam \cup \{\hx\} )\1\{E\} \\
&\qquad = \E \varphi(\hx, (\etam \cup \{\hy\}) \cap \hat{B}_{r}(\hx)  ) \varphi(\hy, (\etam \cup \{\hx\}) \cap \hat{B}_{r}(\hx)  )\1\{E\} \\
&\qquad =
\E \varphi(\hx, (\etam \cup \{\hy\}) \cap \hat{B}_{r}(\hx)  ) \varphi(\hy, (\etam \cup \{\hx\}) \cap \hat{B}_{r}(\hx)  ) (1 - \1\{E^c\}).
\end{align*}

A second application of  H\"older's inequality gives
\begin{align} \label{cov2}
& | \E \varphi(\hx, \etam \cup \{\hy\} ) \varphi(\hy, \etam \cup \{\hx\} )\1\{E\}  
- \E \varphi(\hx, (\etam \cup \{\hy\})  \cap \hat{B}_{r}(\hx) ) \varphi(\hy, (\etam \cup \{\hx\})  \cap \hat{B}_{r}(\hy) )  | 
\nonumber \\
&\qquad \leq c \left(\sup_{  \hx, \hy \in \hR    } \E | \varphi(\hx, \etam \cup \{\hy\} )|^p\right)^{\frac{2}{p}} \, \PP(E^c)^{\frac{p-2}{p}}.
\end{align}
Thus, combining \eqref{cov1} and \eqref{cov2} and using independence of $\varphi(\hx, (\etam \cup \{\hy\})  \cap \hat{B}_{r}(\hx) )$ and
$\varphi(\hy, (\etam \cup \{\hx\})  \cap \hat{B}_{r}(\hy) )$ we have
\begin{align} \label{cov3}
& | \E \varphi(\hx, \etam \cup \{\hy\} ) \varphi(\hy, \etam \cup \{\hx\} )
- \E \varphi(\hx, (\etam \cup \{\hy\})  \cap \hat{B}_{r}(\hx) ) \E \varphi(\hy, (\etam \cup \{\hx\})  \cap \hat{B}_{r}(\hy) )  | \nonumber  \\
&\qquad \leq c \left(\sup_{  \hx, \hy \in \hR    } \E | \varphi(\hx, \etam \cup \{\hy\} )|^p\right)^{\frac{2}{p}} \, \PP(E^c)^{\frac{p-2}{p}}.
\end{align}
Likewise we may show
\begin{align} \label{cov4}
& | \E \varphi(\hx, \etam  ) \E \varphi(\hy, \etam  ) - \E \varphi(\hx, \etam  \cap \hat{B}_{r}(\hx) ) \E \varphi(\hy, \etam \cap \hat{B}_{r}(\hy) )  |  \nonumber \\
&\qquad \leq c \left(\sup_{  \hx, \hy \in \hR    } \E | \varphi(\hx, \etam \cup \{\hy\} )|^p\right)^{\frac{2}{p}} \, \PP(E^c)^{\frac{p-2}{p}}.
\end{align}
Combining \eqref{cov3} and \eqref{cov4} and using that $\PP(E^c)$ decreases exponentially in $\|x - y\|^{\alpha}$, we thus obtain \eqref{assert}.
\end{proof}

\begin{lem} \label{lem:varhatTla} If $\xi$ is exponentially stabilizing with respect to $\eta$ then 
\[
\lim_{\la \to \infty} \la \Var \hat{H}_\la(\etam \cap \hW_\la) = \sigma^2(\xi),
\]
where $\sigma^2(\xi)$ is at \eqref{defsig}.
\end{lem}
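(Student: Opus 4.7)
My plan is to reduce $\la \Var \hat H_\la(\etam \cap \hW_\la)$ via the Mecke--Slivnyak variance formula for Poisson functionals to a diagonal and an off-diagonal integral, and to pass to the limit in each term by dominated convergence. Set
\[
\widehat\xi_\la(\hx, \mathcal{A}) := \xi(\hx, \mathcal{A})\cdot\frac{\la\,\1\{C(\hx, \mathcal{A}) \subseteq W_\la\}\,\1\{\Vol(W_\la \ominus C(\hx, \mathcal{A})) \geq \la/2\}}{\Vol(W_\la \ominus C(\hx, \mathcal{A}))},
\]
so that $\la \hat H_\la(\etam \cap \hW_\la) = \sum_{\hx \in \etam \cap \hW_\la} \widehat\xi_\la(\hx, \etam)$. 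Two observations are crucial. First, $|\widehat\xi_\la|\leq 2|\xi|$, so $\widehat\xi_\la$ inherits the $p$-moment bound with $p>2$ uniformly in $\la$. Second, the volume ratio and its indicators depend on $\mathcal{A}$ only through the cell $C(\hx, \mathcal{A})$, which in turn depends on $\mathcal{A}$ only through $\mathcal{A}\cap\hat B_{R_{\hx}}(x)$; hence the exponential stabilization radius of $\xi$ is simultaneously one for $\widehat\xi_\la$, uniformly in $\la$. Consequently Lemma~\ref{L5} applies to $\widehat\xi_\la$ with a constant $c$ that does not depend on $\la$.

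The Mecke--Slivnyak variance formula then yields
\begin{align*}
\la \Var\hat H_\la(\etam \cap \hW_\la) &= \la^{-1}\int_{\hW_\la} \E\widehat\xi_\la(\hx, \etam\cup\{\hx\})^2 \,\hat\Q(\dint\hx) \\
&\quad + \la^{-1}\iint_{\hW_\la\times\hW_\la} \mathrm{Cov}_\la(\hx,\hy)\,\hat\Q(\dint\hx)\,\hat\Q(\dint\hy),
\end{align*}
where $\mathrm{Cov}_\la(\hx,\hy)$ denotes the two-point Palm covariance of the $\widehat\xi_\la$ values at $\hx$ and $\hy$. For the diagonal term, stationarity of $\eta$ and the substitution $x = \la^{1/d} u$ rewrite it as the integral of $\E[\xi(\0_m, \eta\cup\{\0_m\})^2\,J_\la(u, m)]$ over $u\in[-1/2, 1/2]^d$ and $m\in\MM$, where $0\leq J_\la\leq 4$ and, by the exponential diameter tail bound of Proposition~\ref{lemm:radius}, $J_\la(u,m)\to 1$ almost surely for every $u$ in the open unit cube. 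Since the family $\{\xi(\0_m,\eta\cup\{\0_m\})^2\}$ is uniformly integrable because $p/2 > 1$, dominated convergence forces this term to tend to $\E\xi(\0_M,\eta)^2$.

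For the off-diagonal term, the substitution $y = x + z$ together with one more translation of $x$ to the origin via stationarity converts the expression to an integral over $u\in[-1/2,1/2]^d$ and $z\in\R^d$ of a covariance depending on $(u,z,m_x,m_y)$, weighted by $\la^{-1}\Vol(W_\la\cap(W_\la - z))$. Lemma~\ref{L5} applied to $\widehat\xi_\la$ supplies the integrable dominating bound $c\exp(-\|z\|^\alpha/c)$ in $z$, independently of $\la$ and $u$; and for fixed $(u, z)$ the pointwise limit of the integrand is, once more using $J_\la\to 1$, exactly the integrand of the integral appearing in~\eqref{defsig}. Passing the limit inside by dominated convergence therefore produces the second summand in $\sigma^2(\xi)$, and combined with the diagonal limit this proves the lemma. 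I expect the main obstacle to be precisely this uniform control of the $\la$-dependent objects: verifying that the factor $\la/\Vol(W_\la\ominus C(\hx,\etam))$ with its indicators does not spoil either the exponential stabilization or the moment bound, which is why the truncating indicator $\1\{\Vol(W_\la\ominus C(\hx,\etam))\geq \la/2\}$ is built into the definition of $\hat H_\la$ in the first place.
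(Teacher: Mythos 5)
Your proposal is correct and follows essentially the same route as the paper: the Mecke--Slivnyak decomposition into diagonal and off-diagonal terms, the observation that $|\widehat\xi_\la|\le 2|\xi|$ and that the cell-determined quantities inherit the stabilization radius so that Lemma~\ref{L5} gives a $\la$-uniform exponentially decaying dominating function, the pointwise limits via $\Vol(W_\la\ominus C)/\la\to 1$ in probability, and dominated convergence. Your $\widehat\xi_\la$ is exactly the paper's $\nu_\la$, and your closing remark about the role of the truncation $\1\{\Vol(W_\la\ominus C)\ge\la/2\}$ matches the paper's stated motivation for introducing $\hat H_\la$.
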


\begin{proof}
Put for all $\hx \in \hR$ and any marked point process $\Pm$,
$$
\zeta_\la(\hx, \Pm) := \frac{ \la\,\xi( \hx,  \Pm) }  { \Vol( W_\la \ominus C(\hx,  \Pm) ) }\, \1\{ \Vol( W_\la \ominus C( \hx, \Pm) ) \geq  \frac{\la} {2}  \}
$$
and
$$
\nu_\la(\hx, \Pm) :=  \zeta_\la(\hx,\Pm)\, \1\{ C(\hx, \Pm) \subseteq W_\la \}.
$$
Note that $\zeta_\la$ is translation invariant whereas $\nu_\la$ is not translation invariant.
Then $\la\,\hat{H}_\la(\etam \cap \hW_\la) = \sum_{\hx \in \etam \cap \hW_\la} \nu_\la(\hx,\etam)$.

Recall that $\hat\Q$ is the product measure of Lebesgue measure on $\R^d$ and  $\mathbb{Q}_{\MM}$. By the Slivnyak--Mecke theorem we have
\begin{align*}
& \la  \Var \hat{H}_\la(\etam \cap \hW_\la) = \la^{-1} \E \sum_{\hx \in \etam \cap \hW_\la} \nu_\la^2(\hx,\etam)   \\
&\qquad + \la^{-1} \E \sum_{\hx, \hy \in \etam \cap \hW_\la; \hx \neq \hy} \nu_\la (\hx,\etam)\nu_\la (\hy,\etam)- \la^{-1} \left(\E \sum_{\hx \in \etam \cap \hW_\la} \nu_\la(\hx,\etam)\right)^2 \\
& = \la^{-1} \int_{\hW_\la} \E \nu_\la^2(\hx, \etam)\,\hat\Q(\dint \hx) \\
&  \ \ \ \ \ + \la^{-1} \int_{\hW_\la}  \int_{\hW_\la} \left[ \E   \nu_\la(\hx, \etam \cup \{\hy\} )  \nu_\la(\hy, \etam \cup \{\hx\} ) - \E \nu_\la(\hx, \etam)\,\E \nu_\la (\hy, \etam)\right]
\,\hat\Q(\dint \hy)\,\hat\Q(\dint \hx) \\
& =: I_1(\la) + I_2(\la).
\end{align*}

Using stationarity and the transformation $u := \la^{1/d}x$ we  rewrite $I_1(\la)$ as
\begin{equation*}
I_1(\la) = \la^{-1} \int_{W_\la}\int_{\MM} \E Z_\la^2(\0_m,\etam,x)\,\Q_\MM(\dint m)\,\dint x = \int_{W_1} \E Z_\la^2(\0_M,\etam,\la^{1/d}u)\,\dint u,
\end{equation*}
where $Z_\la((z,m_z),\Pm,x) := \zeta_\la((z,m_z), \Pm)\,\1\{C((z,m_z),\Pm) \subseteq W_\la-x\}$.
Similarly, by translation invariance of $\zeta_\la$, we have
\begin{align*}
I_2(\la) &= \la^{-1} \int_{W_\la} \int_{W_\la-x} \int_{\MM} \int_{\MM} [ \E Z_\la(\0_{m_1}, \etam \cup \{z_{m_2}\}, x)\,Z_\la(z_{m_2}, \etam \cup \{\0_{m_1}\}, x) \\
&\hspace{2.75cm} - \E Z_\la(\0_{m_1}, \etam, x)\,\E Z_\la(z_{m_2}, \etam, x)]\,\Q_\MM(\dint m_1)\,\Q_\MM(\dint m_2)\,\dint z\,\dint x \\
&= \int_{W_1} \int_{W_\la-\la^{1/d}u} [\E Z_\la(\0_M,\etam \cup \{z_M\},\la^{1/d}u)\, Z_\la(z_M,\etam \cup \{\0_M\},\la^{1/d}u) \\
&\hspace{2.75cm} - \E Z_\la(\0_M,\etam,\la^{1/d}u)\,\E Z_\la(z_M,\etam,\la^{1/d}u)]\,\dint z\,\dint u,
\end{align*}
where $\0_{m_1} := (\0,m_1)$, $z_{m_2} := (z,m_2)$,
$\0_M := (\0,M_\0)$, $z_M := (z,M_z)$ and $M_\0$, $M_z$ are random marks distributed according to $\Q_\MM$.

Since $|\zeta_\la(\hx,\etam)| \leq 2|\xi(\hx,\etam)|$, $\zeta_\la$ satisfies a $p$-moment condition, $p \in (2, \infty)$. Recall that
$\Vol(W_\la \ominus C(\hx,\etam))/\la$ tends in probability to $1$ and notice that $W_\la - \la^{1/d}u$ for $u \in (-1/2,1/2)^d$ increases to $\R^d$ as $\la \to \infty$.
Thus, as $\la \to \infty$, we have for any $\hat\0 := (\0,m_\0)$, $\hat{z} := (z,m_z) \in \hR$ and $u \in (-1/2,1/2)^d$,
\begin{align}
\E Z_\la(\hat\0,\etam,\la^{1/d}u) &\to \E \xi(\hat\0,\etam), \label{L2} \\
\E Z_\la^2(\hat\0, \etam,\la^{1/d}u) &\to \E \xi^2(\hat\0, \etam), \label{L3} \\
\E Z_\la(\hat\0, \etam \cup \{ \hat{z}\}, \la^{1/d}u)Z_\la(\hat{z},\etam \cup \{ \hat\0 \} ,\la^{1/d}u) &
\to \E\xi(\hat\0, \etam  \cup \{ \hat{z}\}) \xi(\hat{z}, \etam \cup \{\hat\0 \}). \label{L4}
\end{align}

These ingredients are enough to establish variance asymptotics for $\hat{H}_\la(\etam \cap \hW_\la)$.
Indeed, $I_1(\la)$ converges to $\E \xi^2(\0_M, \etam)$ by \eqref{L3}.
Concerning $I_2(\la)$, for each $u \in (-1/2,1/2)^d$ we have
\begin{align*}
& \lim_{\la \to \infty}  \int_{W_\la-\la^{1/d}u}
[\E Z_\la(\0_M,\etam \cup \{z_M\},\la^{1/d}u) Z_\la(z_M,\etam \cup \{\0_M\},\la^{1/d}u) \\
&\hspace{3cm} - \E Z_\la(\0_M,\etam,\la^{1/d}u)\,\E Z_\la(z_M,\etam,\la^{1/d}u)]\,\dint z\\
&= \int_{\R^d} [ \E \xi(\0_M,\etam \cup \{z_M\}) \xi(z_M,\etam \cup \{\0_M\}) - \E \xi(\0_M,\etam) \E \xi(z_M,\etam)]\,\dint z.
\end{align*}
Here we use that for any $x \in \R^d$, the function $Z_\la(\cdot, \cdot, x): \hR \times {\bf N} \to \R$ is exponentially stabilizing with respect to $\eta$ and satisfies the $p$-moment condition for some $p \in (2, \infty)$ Thus, from Lemma \ref{L5}, the integrand is dominated by an exponentially decaying function of $\|z\|^{  \alpha } $. Applying the dominated convergence theorem, together with \eqref{L2} and \eqref{L4}, we obtain the desired variance asymptotics since $\Vol(W_1) = 1$.
\end{proof} 
\vskip.3cm
The next lemma completes the proof of Theorem \ref{Thm1} (i).

\begin{lem} \label{VarhatstarT} If $\xi$ is exponentially stabilizing with respect to $\eta$ then 
\[
\lim_{\la \to \infty} \la \Var \hat{H}_\la(\etam) = \lim_{\la \to \infty} \la \Var \hat{H}_\la(\etam\cap \hW_\la ) = \sigma^2(\xi).
\]
\end{lem}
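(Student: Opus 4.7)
The plan is to establish $\lim_{\la \to \infty} \la\,\Var(\hat H_\la(\etam) - \hat H_\la(\etam \cap \hW_\la)) = 0$, since combined with Lemma \ref{lem:varhatTla} and the subadditivity of $\sqrt{\Var(\cdot)}$ this yields the asserted first equality in the lemma. Reusing the notation of the proof of Lemma \ref{lem:varhatTla},
\[
\la\bigl(\hat H_\la(\etam) - \hat H_\la(\etam \cap \hW_\la)\bigr) = \sum_{\hx \in \etam \cap \hW_\la^c} \nu_\la(\hx, \etam).
\]
For $\hx=(x,m_x) \in \hW_\la^c$, a nonzero contribution of $\nu_\la(\hx, \etam)$ forces the cell $C(\hx, \etam)$ to be a nonempty subset of $W_\la$; since $C(\hx, \etam) \subseteq B_{D_{\hx}}(x)$ by Proposition \ref{lemm:radius}, this in turn forces $D_{\hx} \geq d(x, W_\la)$. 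Moreover $|\nu_\la(\hx, \etam)| \leq 2|\xi(\hx, \etam)|$, so $\nu_\la$ inherits the $p$-moment condition and exponential stabilization of $\xi$ with constants uniform in $\la$.

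By the Slivnyak--Mecke formula,
\[
\Var\Bigl(\sum_{\hx \in \etam \cap \hW_\la^c} \nu_\la(\hx, \etam)\Bigr) = \int_{\hW_\la^c} \E\,\nu_\la^2(\hx, \etam)\,\hat\Q(\dint\hx) + \iint_{\hW_\la^c \times \hW_\la^c} K_\la(\hx, \hy)\,\hat\Q(\dint\hx)\,\hat\Q(\dint\hy),
\]
where $K_\la(\hx, \hy) := \E\,\nu_\la(\hx, \etam \cup \{\hy\})\nu_\la(\hy, \etam \cup \{\hx\}) - \E\,\nu_\la(\hx, \etam)\,\E\,\nu_\la(\hy, \etam)$. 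For the diagonal term, H\"older's inequality together with the $p$-moment condition ($p>2$) and the exponential tail of $D_{\hx}$ from Proposition \ref{lemm:radius} yields $\E\,\nu_\la^2(\hx, \etam) \leq c\,\exp(-c^{-1} d(x, W_\la)^d)$; the co-area estimate of Lemma \ref{L5.3} then integrates this to $O(\la^{(d-1)/d})$.

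For the off-diagonal term, two bounds on $|K_\la(\hx, \hy)|$ are at hand: Lemma \ref{L5} applied to $\nu_\la$ gives $|K_\la(\hx, \hy)| \leq c\,\exp(-c^{-1}\|x-y\|^\alpha)$, while Cauchy--Schwarz combined with the diagonal estimate above gives $|K_\la(\hx, \hy)| \leq c\,\exp(-c^{-1}(d(x, W_\la)^d + d(y, W_\la)^d))$. Via $\min(a,b) \leq \sqrt{ab}$ these combine into
\[
|K_\la(\hx, \hy)| \leq c\,\exp\bigl(-\tfrac{1}{2c}\|x-y\|^\alpha\bigr)\exp\bigl(-\tfrac{1}{2c}(d(x,W_\la)^d + d(y,W_\la)^d)\bigr),
\]
and with $f(x) := \exp(-(2c)^{-1} d(x,W_\la)^d)\,\1\{x \in W_\la^c\}$ and $g(u) := \exp(-(2c)^{-1}\|u\|^\alpha)$, Young's convolution inequality gives
\[
\iint g(x-y)f(x)f(y)\,\dint x\,\dint y \leq \|g\|_1\,\|f\|_2^2 = O(\la^{(d-1)/d}),
\]
the last equality again via co-area. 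Hence $\Var(\sum \nu_\la) = O(\la^{(d-1)/d})$ and $\la\,\Var(\hat H_\la(\etam) - \hat H_\la(\etam \cap \hW_\la)) = O(\la^{-1/d}) \to 0$. The main obstacle is precisely the off-diagonal estimate: a pure Cauchy--Schwarz bound that ignores the $\|x-y\|^\alpha$ decay integrates to $O(\la^{2(d-1)/d})$, which is fatal in dimension $d \geq 2$; the geometric-mean trick is exactly what balances the stabilization decay in $\|x-y\|$ against the boundary decay in $d(\cdot, W_\la)$ and recovers the desired surface-order estimate.
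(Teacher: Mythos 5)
Your proof is correct and follows essentially the same route as the paper: the same split of $\la\,\hat H_\la(\etam)$ into the interior sum (whose scaled variance is handled by Lemma \ref{lem:varhatTla}) and the boundary sum over $\etam \cap \hW_\la^c$, the same Slivnyak--Mecke decomposition into diagonal and off-diagonal terms, and the same co-area estimate producing the surface-order bound $O(\la^{(d-1)/d})$ that kills the covariance. The only difference is technical and cosmetic: for the off-diagonal term the paper retains the local moment factor $(\E |\hnu_\la(\hx,\etam)|^p)^{2/p} \le c\exp(-c^{-1}d(x,W_\la)^d)$ inside the covariance bound of Lemma \ref{L5} and then integrates out $y$ over all of $\R^d$, whereas you interpolate between the pure stabilization bound and a Cauchy--Schwarz bound via $\min(a,b)\le\sqrt{ab}$ followed by Young's convolution inequality --- both devices yield the same estimate.
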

\begin{proof}
Write
$$
\la\,\hat{H}_\la(\etam)  =  \sum_{\hx \in \etam \cap \hW_\la} \nu_\la(\hx,\etam) + \sum_{\hx \in \etam \cap \hW_\la^c} \nu_\la(\hx,\etam).
$$
Now
\begin{align*}
\la \Var \hat{H}_\la(\etam) &=  \la^{-1} \Var \left( \sum_{\hx \in \etam \cap \hW_\la} \nu_\la(\hx,\etam)
\right) +  \la^{-1} \Var \left(  \sum_{\hx \in \etam \cap \hW_\la^c} \nu_\la(\hx,\etam) \right)  \\
&\quad \ \ \ \ + 2 \la^{-1} \Cov \left( \sum_{\hx \in \etam \cap \hW_\la} \nu_\la(\hx,\etam),  \sum_{\hx \in \etam \cap \hW_\la^c} \nu_\la(\hx,\etam) \right).
\end{align*}
It suffices to show $\Var \left(  \sum_{\hx \in \etam \cap \hW_\la^c} \nu_\la(\hx,\etam) \right) = O(\la^{(d-1)/d})$,
for then the Cauchy--Schwarz inequality shows that the covariance term in the above expression is negligible compared to $\la$.

Now we show $\Var \left(  \sum_{\hx \in \etam \cap \hW_\la^c} \nu_\la(\hx,\etam) \right) = O(\la^{(d-1)/d})$ as follows.
Note that $\hat{H}_\la(\etam) = \sum_{\hx \in \etam} \hat{\nu}_\la(\hx, \etam),$ where $\hat{\nu}_\la(\hx, \etam)$ is at \eqref{hnu}.
By the Slivnyak--Mecke theorem we have
\begin{align*}
& \la \Var  \left(  \sum_{\hx \in \etam \cap \hW_\la^c} \nu_\la(\hx,\etam) \right) = \la^{-1} \E \sum_{\hx \in \etam \cap \hW_\la^c} \hnu_\la^2(\hx,\etam)  \\
&\qquad + \la^{-1} \E \sum_{\hx, \hy \in \etam \cap \hW_\la^c; \hx \neq \hy} \hnu_\la (\hx,\etam)\hnu_\la (\hy,\etam) - \la^{-1} \left(\E \sum_{\hx \in \etam \cap \hW_\la^c} \hnu_\la(\hx,\etam)\right)^2 \\
& = \la^{-1} \int_{\hW_\la^c} \E \hnu_\la^2(\hx, \etam)\,\hat\Q(\dint \hx) \\
&\qquad + \la^{-1} \int_{\hW_\la^c}  \int_{\hW_\la^c} [ \E   \hnu_\la
(\hx, \etam \cup \{\hy\} )  \hnu_\la(\hy, \etam \cup \{\hx\} ) - \E \hnu_\la(\hx, \etam)\, \E \hnu_\la (\hy, \etam)]\,\hat\Q(\dint \hx)\,\hat\Q(\dint \hy)\\
& =: I_1^*(\la) + I_2^*(\la).
\end{align*}

By the H\"older inequality, the moment condition on $\xi$ and Proposition \ref{lemm:radius} we have $\E \hnu_\la(\hx, \etam)^p \leq c \exp\left( - \frac{1}{c} \, d(x, W_\la)^d \right)$
for some positive constant $c$. Then, similarly as in Lemma \ref{L5.3}, we may use the co-area formula to obtain $I_1^*(\la) = O(\la^{-1/d}).$

To bound $I_2^*(\la)$ we appeal to Lemma \ref{L5}. Notice that $|\hnu_\la(\hx,\etam)| \leq 2|\xi(\hx,\etam)|$.
Since $\hnu_\la, \la \geq 1,$ are exponentially stabilizing with respect to  $\etam$ and satisfy the $p$-moment condition for $p \in (2, \infty)$, then
by Lemma \ref{L5}
\begin{align*}
& | \E   \hnu_\la
(\hx, \etam \cup \{\hy\} )  \hnu_\la(\hy, \etam \cup \{\hx\} ) - \E \hnu_\la(\hx, \etam)\, \E \hnu_\la (\hy, \etam)|\\
&\qquad  \leq c\,\left(\sup_{  \hx, \hy  \in \hR    } \E | \hnu_\la(\hx,  \etam \cup \{\hy\}  )|^p\right)^{\frac{2}{p}} \exp\left(-\frac{1}{c} \, \|x - y\|^\alpha \right).
\end{align*}
Using this estimate we compute
\begin{align*}
I_2^*(\la) &\leq \la^{-1} \int_{\hW_\la^c} \int_{W_\la^c} c \,(\E | \hnu_\la(\hx, \etam)|^{p})^{\frac{2}{p}} \exp\left(-\frac{1}{c}\, \|x - y\|^\alpha \right)\,\dint y\,\hat\Q(\dint \hx) \\
&\leq c\, \la^{-1} \int_{\hW_\la^c} ( \E | \hnu_\la(\hx, \etam)|^{p})^{\frac{2}{p}} \int_{\R^d} \exp\left(-\frac{1}{c} \, \|x - y\|^\alpha\right)\,\dint y\,\hat\Q(\dint \hx) \\
&\leq c\, \la^{-1} \int_{W_\la^c} \exp\left( -\frac{1}{c} \, d(x, W_\la)^d \right) \,\dint x  \int_{\R^d} \exp\left(-\frac{1}{c}\, \|y\|^\alpha\right)\,\dint y.
\end{align*}
Since  $\int_{\R^d}   \exp(- \| y\|^\alpha/c )\,\dint y < \infty$, we obtain
$$
I_2^*(\la)  \leq   c\, \la^{-1} \int_{W_\la^c}   \exp\left( -\frac{1}{c} \, d(x, W_\la)^d \right)\,\dint x.
$$
Arguing as we did for $I_1^*(\la)$ we obtain $I_2^*(\la) = O(\la^{-1/d}).$
\end{proof}

\vskip.3cm
\noindent{\bf Proof of Theorem \ref{Thm1} (ii).} Now we prove the central limit theorems for $H_\la(\etam \cap \hW_\la)$ and $H_\la(\etam)$.
Let us first introduce some notation. Define for any stationary marked point process $\Pm$ on $\hR$, 
\begin{align*}
\xi_\la (\hx, \Pm) &: = \frac{\la\,\xi( \la^{1/d} \hx, \la^{1/d} \Pm) }  { \Vol( W_\la \ominus C( \la^{1/d} \hx, \la^{1/d} \Pm) ) } \, \1\{ C(\la^{1/d} \hx, \la^{1/d}
\Pm) \subseteq W_\la \},\\
\hat{\xi}_\la(\hx, \Pm) &:= \xi_\la(\hx, \Pm)\,\1\{ \Vol( W_\la \ominus C( \la^{1/d} \hx, \la^{1/d} \Pm) ) \geq  \frac{\la} {2}  \},
\end{align*}
where $\lambda^{1/d}\hx := (\lambda^{1/d} x,m_x)$ and $\lambda^{1/d}\Pm := \{\lambda^{1/d}\hx: \hx \in \Pm\}$.

Put
\begin{align*}
S_\la(\eta_\la \cap \hW_1) := \sum_{\hx \in \eta_\la \cap \hW_1} \xi_\la(\hx, \eta_\la), &\ \quad \hat{S}_\la(\eta_\la \cap \hW_1) := \sum_{\hx \in \eta_\la \cap \hW_1} \hat{\xi}_\la(\hx, \eta_\la),
\end{align*}
as well as
\begin{align*}
 S_\la(\eta_\la) := \sum_{\hx \in \eta_\la} \xi_\la(\hx, \eta_\la), &\ \quad \hat{S}_\la (\eta_\la) := \sum_{\hx \in \eta_\la} \hat{\xi}_\la(\hx, \eta_\la).
\end{align*}
Notice that
$$
S_\la(\eta_\la \cap \hW_1) \eqd \la\,H_\la(\eta \cap \hW_\la), \quad  S_\la(\eta_\la) \eqd \la\,H_\la(\eta) $$
and
$$
\hat{S}_\la(\eta_\la \cap \hW_1) \eqd \la\,\hat{H}_\la(\eta \cap \hW_\la) \quad \text{and}  \quad \hat{S}_\la(\eta_\la) \eqd \la\,\hat{H}_\la(\eta)$$
due to the distributional identity $\la^{1/d} \eta_\la \eqd \eta_1$.
The reason for  expressing the statistic $\la\,H_\la(\etam \cap \hW_\la)$ in terms of the scores $\xi_\la(\hx, \eta_\la)$  is that it puts us in a better position to apply the  normal approximation results of \cite{LSY} to the sums $S_\la(\eta_\la \cap \hW_1)$.

In particular we appeal to  Theorem 2.3 of \cite{LSY}, with $s$ replaced by $\la$ there,  to establish a central limit theorem for $\hat{S}_\la (\eta_\la \cap \hW_1)$.  Indeed, in that paper we may put $\XX$ to be $\R^d$, we let $\Q$ be Lebesgue measure on $\R^d$ so that $\eta_\la $ has intensity measure $\la \Q$, and we put $K = W_1$.
We may write $\hat{S}_\la(\eta_\la \cap \hat{W}_1) = \sum_{\hx \in \eta_\la \cap \hat{W}_1 }  \hat{\xi}_\la(\hx, \eta_\la)\,\1\{x \in W_1\}.$
 Note that $\hat{\xi}_\la(\hx, \eta_\la)\1\{x \in W_1\}, \hx \in \hat\XX,$ are exponentially stabilizing with respect to the input $\eta_\la$, they satisfy the $p$-moment condition for some $p \in (4, \infty)$,  they vanish for $x \in W_1^c$, and they (trivially) decay exponentially fast with respect to the distance to $K$. (Here the notion of decaying exponentially fast with respect to the distance to $K$ is defined at (2.8)  of \cite{LSY};  since the distance to $K$ is zero for $x \in K$ this condition is trivially satisfied.)
 This makes $I_{K, \la} = \Theta(\la)$ where $I_{K, \la}$ is defined at (2.10) of \cite{LSY}.  Thus all conditions of Theorem 2.3 of \cite{LSY} are fulfilled and
 we deduce a central limit theorem for $\hat{S}_\la(\eta_\la \cap \hW_1)$ and hence for $\hat{H}_\la(\eta \cap \hW_\la)$.

We may also apply Theorem 2.3 of \cite{LSY} to show a central limit theorem for $\hat{S}_\la(\eta_\la)$. For $x \in W_1^c$ we find the radius $D_x$ such that $C(\la^{1/d}\hx,\la^{1/d}\eta_\la) \subseteq B_{D_x}(\la^{1/d}x)$.
Then the score $\hat{\xi}_\la(\hx, \eta_\la)$ vanishes if $D_x > d(\la^{1/d}x,W_\la)$. As in Section \ref{stabsection}, $D_x$ has exponentially decaying tails and thus $\hat{\xi}_\la$ decays exponentially fast with respect to the distance to $K$.

Let $d_K(X, Y)$ denote the Kolmogorov distance between random variables $X$ and $Y$. Applying Theorem 2.3 of \cite{LSY} we obtain
$$
  d_K \left( \frac{ \hat{S}_\la (\eta_\la \cap \hW_1)   - \E \hat{S}_\la(\eta_\la \cap \hW_1) } { \sqrt{   \Var \hat{S}_\la (\eta_\la \cap \hW_1)    }},  N(0,1) \right) \leq  \frac{ c }  { \sqrt{   \Var \hat{S}_\la  (\eta_\la \cap \hW_1) }}
$$
and
$$
  d_K \left( \frac{ \hat{S}_\la (\eta_\la)   - \E \hat{S}_\la(\eta_\la) } { \sqrt{   \Var \hat{S}_\la(\eta_\la)    }},  N(0,1) \right) \leq  \frac{ c }  { \sqrt{   \Var \hat{S}_\la (\eta_\la)   }}.
$$
 Combining this with  \eqref{varlimit} and using $\Var \hat{S}_\la (\eta_\la \cap \hW_1)  \geq c\, \la$, we obtain as $\la \to \infty$
\[
  \frac{ \hat{S}_\la (\eta_\la \cap \hW_1)   - \E \hat{S}_\la (\eta_\la \cap \hW_1)} { \sqrt{ \la}}     \tod N(0, \sigma^2(\xi))
\]
and 
\[
\frac{ \hat{S}_\la (\eta_\la)   - \E \hat{S}_\la (\eta_\la)} { \sqrt{ \la}}     \tod N(0, \sigma^2(\xi)).
\]

To show that
\begin{equation} \label{CLT2}
  \frac{ {S}_\la (\eta_\la \cap \hW_1)  - \E {S}_\la(\eta_\la \cap \hW_1) } { \sqrt{ \la}}
  \tod N(0, \sigma^2(\xi)),
\end{equation}
as $\la \to \infty$, it suffices to show $\lim_{\la \to \infty} \E| S_\la (\eta_\la \cap \hW_1)- \hat{S}_\la(\eta_\la \cap \hW_1)| = 0$. Since $\E| S_\la (\eta_\la \cap \hW_1) - \hat{S}_\la (\eta_\la
\cap \hW_1)| = \la\,\E |H _\la (\eta_\la \cap \hW_\la) - \hat{H} _\la (\eta_\la \cap \hW_\la)|$, we may use Lemma \ref{L5.1} to prove \eqref{CLT2}.
 Likewise, to obtain the central limit theorem for $ {S}_\la (\eta_\la)$, it suffices to show $\lim_{\la \to \infty} \E| S_\la (\eta_\la)- \hat{S}_\la(\eta_\la)| = 0$, which is a consequence of Lemma \ref{L5.2}. Hence we deduce from the central limit theorem for $\hat{S}_\la(\eta_\la)$ that as $\la \to \infty$
\[
  \frac{ {S}_\la(\eta_\la) - \E {S}_\la(\eta_\la) } { \sqrt{ \la}} \eqd \sqrt{\la} \left( H_\la(\eta) - \E h(K_\0^\rho(\eta)) \right) \tod N(0, \sigma^2(\xi)).
\]
This completes the proof of Theorem \ref{Thm1} (ii).
 \qed

\section{Proofs of Theorems \ref{thmApp1} and \ref{thmApp2}}

Before giving the proof of Theorem \ref{thmApp1} we recall from Section \ref{stabsection} that 
translation invariant cell characteristics $\xi^{\rho_i}$ are exponentially stabilizing with respect to Poisson input $\eta$. This allows us to apply Theorem \ref{Thm1} to cell
characteristics of tessellations defined by $\rho_i, \, i=1, 2, 3$. For example, we can take $h(\cdot)$ to be either the volume or surface area of a cell or the  radius of the circumscribed or inscribed ball.

\vskip.3cm

\noindent{\bf Proof of Theorem \ref{thmApp1}.} (i)  The assertion of unbiasedness follows from Theorem \ref{unbias}(i).  (ii) To prove the asymptotic normality, we write
$$
h(C^{\rho_i}(\hx, \etam))  := \1\{\Vol (C^{\rho_i}(\hx, \etam)) \leq t \} =: \varphi^{\rho_i}(\hx, \etam).
$$
To deduce \eqref{CLTapplic} from Theorem \ref{Thm1}(ii) we need only verify the $p$-moment condition for $p \in (4, \infty)$ and the positivity of $\sigma^2(\varphi^{\rho_i})$. The moment
condition holds for all $p \ \in  [1, \infty) $ since $\varphi$ is bounded by $1$. To verify  the positivity of $\sigma^2(\varphi^{\rho_i})$, we recall Remark (i) following Theorem \ref{Thm1}.  More precisely we may
 use Theorem 2.1 of \cite{PY1}  and show that there is an a.s. finite random variable $S$ and a non-degenerate random variable $\Delta^{\rho_i}(\infty)$ such that for all finite  ${\cal A} \subseteq \hat{B}_S(\0)^c$ we have
\begin{align*}
\Delta^{\rho_i}(\infty) &  = \sum_{ \hx \in (\eta \cap \hat{B}_S(\0)) \cup {\cal A} \cup \{\0_M \} }
\1\{\Vol (C^{\rho_i}(\hx, (\eta \cap \hat{B}_S(\0)) \cup {\cal A} \cup \{\0_M\} ))\leq t \} \\
 &   \hspace{2.5cm}   - \sum_{ \hx \in (\eta \cap \hat{B}_S(\0)) \cup {\cal A} } \1\{\Vol (C^{\rho_i}(\hx, (\eta \cap \hat{B}_S(\0)) \cup {\cal A} )) \leq t \}.
\end{align*}
We first explain the argument for the Voronoi case and then indicate  how to extend it to treat the Laguerre and Johnson--Mehl tessellations.

Let $t \in (0,\infty)$ be arbitrary but fixed. Let $N$ be the smallest integer of even parity that is larger than $4\sqrt{d}$.
The choice of this value will be explained later in the proof.
For $L>0$ we consider a collection of $N^d$ cubes $Q_{L,1},\ldots, Q_{L,N^d}$ centered around  $x_i, i = 1,\ldots, N^d,$ such that
\begin{enumerate}
\item[(i)]  $Q_{L,i}$ has  side length $\frac{L} {N}$, and
\item[(ii)]  $\cup \{ Q_{L,i}, i=1, \ldots, N^d\} = [- \frac{L} {2} , \frac{L} {2} ]^d$.
\end{enumerate}
Put $\eps_L:= L/100N$  and $\hat{Q}_{L,i} := Q_{L,i} \times \MM$. Define the event
$$
E_{L, N} := \left\{ |\eta \cap \hat{Q}_{L,i} \cap \hat{B}_{\eps_L}(x_i)|=1,\, |\eta \cap \hat{Q}_{L,i} \cap \hat{B}_{\eps_L}^c(x_i)|=0, \forall
i=1,\ldots,N^d\right\}.
$$
Elementary properties of the Poisson point process show that $\PP(E_{L,N}) > 0$ for all $L$ and $N.$

On $E_{L,N}$ the faces of the tessellation restricted to $[-L/2, L/2]^d$ nearly coincide with the union of the boundaries of
$Q_{L,i}, i=1, \ldots, N^d$ and the cell generated by $\hx \in \eta \cap [-L/2+L/N,L/2-L/N]^d$ is determined only by $\eta \cap
(\cup \{Q_{L,j}, j \in I(\hx)\})$, where $j \in I(\hx)$ if and only if $\hx \in \hat{Q}_{L,j}$ or $\hat{Q}_{L,j} \cap \hat{Q}_{L,i}
\neq \emptyset$ for $i$ such that $\hx \in \hat{Q}_{L,i}$.
Thus inserting a point at the origin will not affect the cells far from the origin.
More precisely, the cells around the points outside ${\hat R}_{L,N} := [-2L/N,2L/N]^d \times \MM$ are not affected
by inserting a point at the origin. For $S_L := L/2$ we have ${\hat R}_{L,N} \subseteq {\hat B}_{S_L}(\0)$
due to our choice of the value $N$.
Therefore,
\[
C^{\rho_1}(\hx,(\eta \cap \hat{B}_{S_L}(\0)) \cup {\cal A} \cup \{\0_M\}) =
C^{\rho_1}(\hx,(\eta \cap \hat{B}_{S_L}(\0)) \cup {\cal A})
\]
for any finite ${\cal A} \subseteq {\hat B}_{S_L}(\0)^c$ and $\hx \in (\eta \cap ({\hat B}_{S_L}(\0) \setminus {\hat R}_{L,N})) \cup {\cal A}$.
Consequently, on $E_{L,N}$,
\begin{align*}
\Delta^{\rho_1}(\infty) &  = \sum_{ \hx \in (\eta \cap {\hat R}_{L,N}) \cup \{\0_M \} }
\1\{\Vol (C^{\rho_1}(\hx, (\eta \cap \hat{B}_{S_L}(\0)) \cup {\cal A} \cup \{\0_M\} ))\leq t \} \\
 &   \hspace{2.5cm}   - \sum_{ \hx \in \eta \cap {\hat R}_{L,N} } \1\{\Vol (C^{\rho_1}(\hx, (\eta \cap \hat{B}_{S_L}(\0)) \cup {\cal A} )) \leq t \}.
\end{align*}
Figure \ref{fig1} illustrates the difference appearing in $\Delta^{\rho_1}(\infty)$ on $E_{L,N}$ for $d=2$.
The ball $B_{S_L}(\0)$ is shown in blue whereas the square $[-2L/N,2L/N]^2$ is in red.
The cells generated by the points outside the red square are identical for both point configurations whereas the cells generated
by the points inside the red square may differ.

\begin{figure}
\begin{center}
\includegraphics[width=0.475\textwidth]{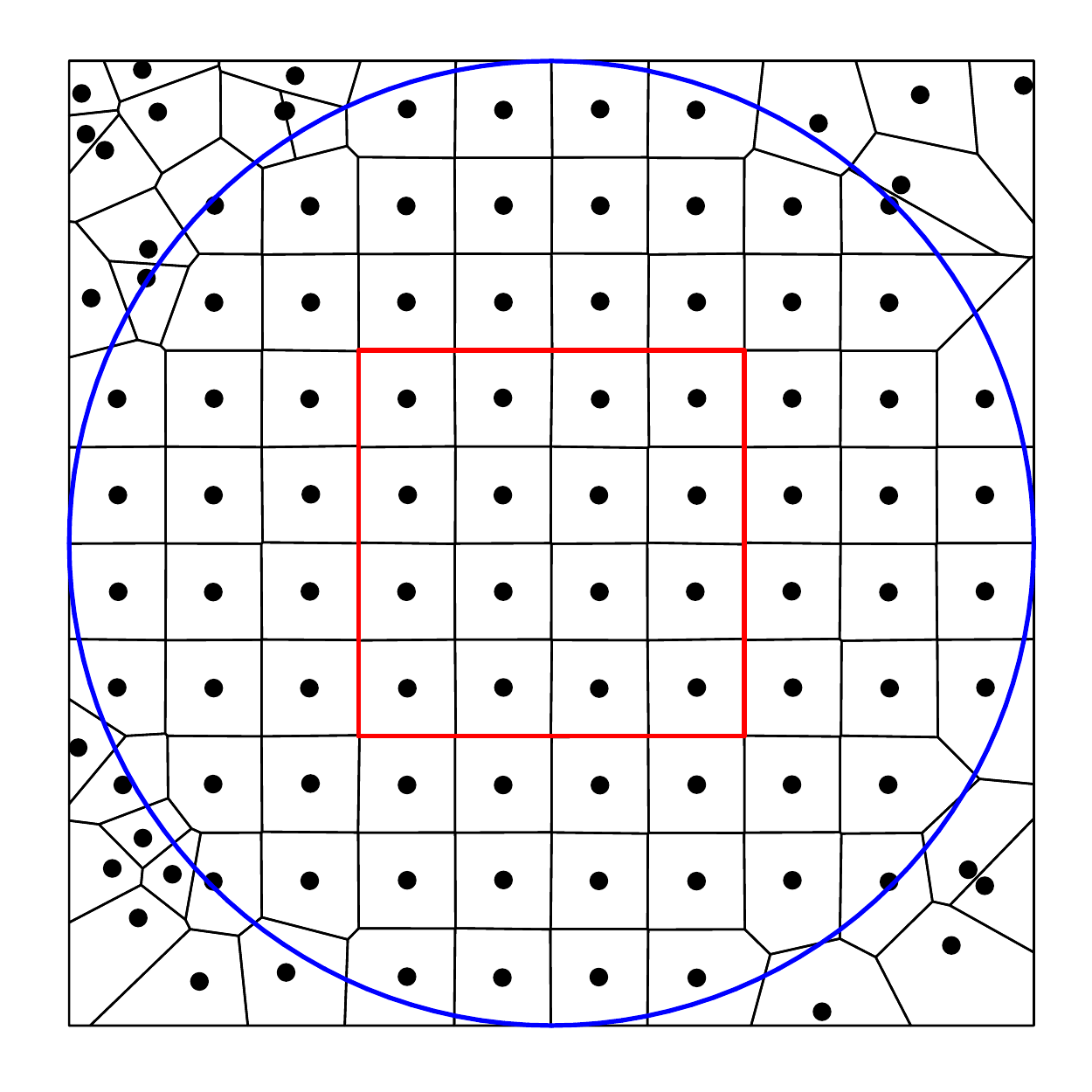}
\includegraphics[width=0.475\textwidth]{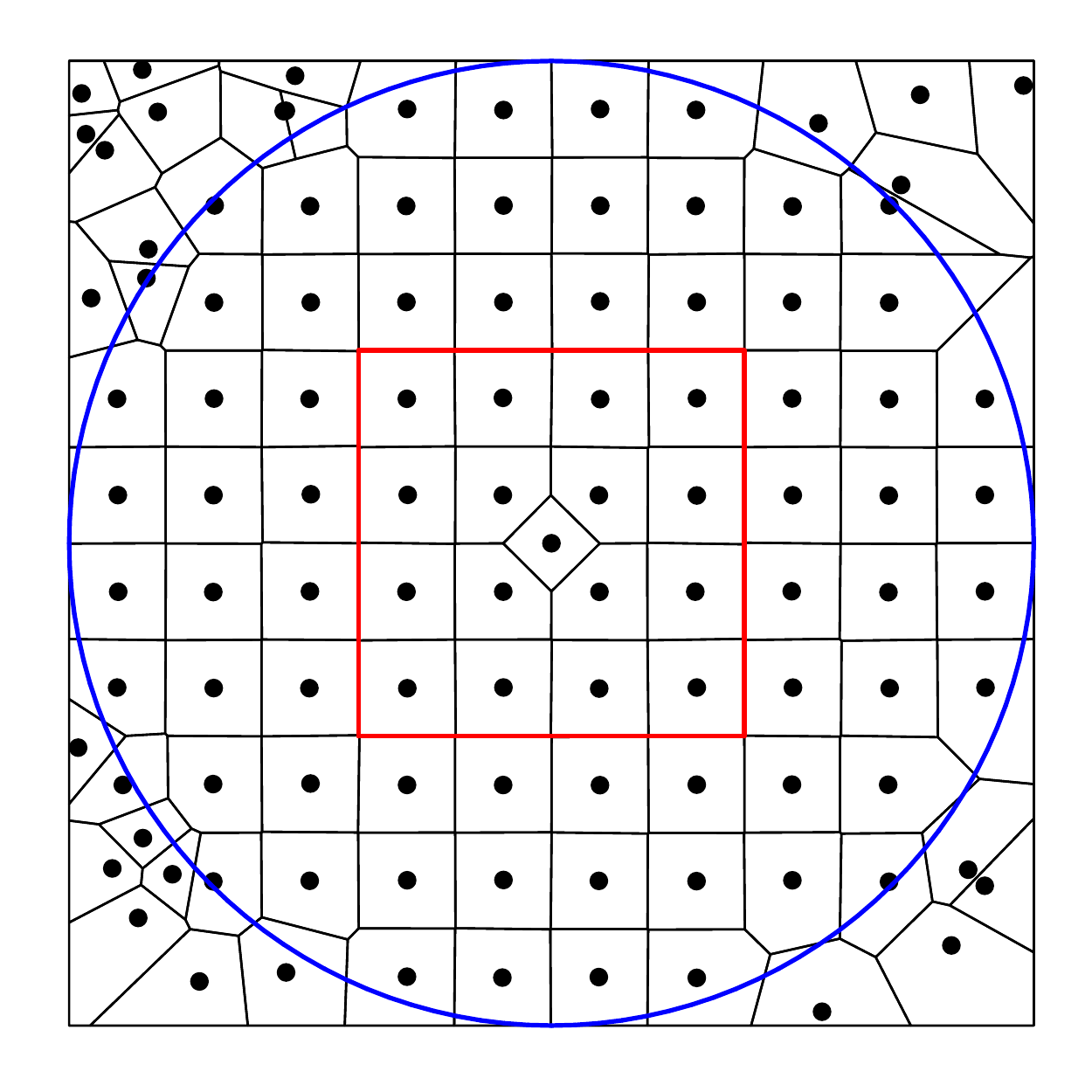}
\caption{Voronoi tessellations in $[-L/2,L/2]^2$ generated by $(\eta \cap \hat{B}_{S_L}(\0)) \cup {\cal A}$ (left)
and $(\eta \cap \hat{B}_{S_L}(\0)) \cup {\cal A} \cup \{\0_M\}$ (right).
The ball $B_{S_L}(\0)$ is shown in blue whereas the square $[-2L/N,2L/N]^2$ is in red.}
\label{fig1}
\end{center}
\end{figure}

On the event $E_{L,N}$, the cell generated by $\hx \in (\eta \cap {\hat R}_{L,N}) \cup \{\0_M \}$
is contained in $\cup \{Q_{L,j}, j \in I(\hx)\}$ and thus
\[
\sup_{\hx \in (\eta \cap {\hat R}_{L,N}) \cup \{\0_M\}} \Vol(C^{\rho_1}(\hx,(\eta \cap \hat{B}_{S_L}(\0)) \cup {\cal A})) \leq  \left(\frac{3L} {N}\right)^d.
\]
If $L \in (0, N t^{1/d}/3)$, then all cell volumes in ${\hat R}_{L,N}$ are at most $t$; thus
$\Delta^{\rho_1}(\infty) = 1$ on the event $E_{L_1,N}$ with $L_1 := \frac16  N t^{1/d}$.
Similarly,
$$
\inf_{\hx \in (\eta \cap {\hat R}_{L,N}) \cup \{\0_M\}}  \Vol(C^{\rho_1}(\hx, (\eta \cap \hat{B}_{S_L}(\0)) \cup {\cal A} \cup \{\0_M\})) \geq
\left(\frac{L}{3N}\right)^d.
$$
If $L \in (3N t^{1/d}, \infty)$, then all the cell volumes in ${\hat R}_{L,N}$ exceed $t$ and thus
$\Delta^{\rho_1}(\infty) = 0$ on the event $E_{L_2,N}$ with $L_2 := 6 N t^{1/d}$.
Taking $S := S_{L_1}\1\{E_{L_1,N}\} + S_{L_2}\1\{E_{L_2,N}\}$,
we have found two disjoint 
events $E_{L_1,N}$ and $E_{L_2,N}$, each  having positive probability, such that $\Delta^{\rho_1} (\infty)$ takes different values on these events,
and  thus it is non-degenerate.
Hence, $\sigma^2(\varphi^{\rho_1})>0$ and we can apply Theorem \ref{Thm1}(ii).

To prove the positivity of $\sigma^2(\varphi^{\rho_2})$  and  $\sigma^2(\varphi^{\rho_3})$ we shall consider a subset of $E_{L,N}$.  Assume
there exists a parameter $\mu^* \in [0, \mu]$ and a small interval  $I_{\alpha}(\mu^*) \subseteq [0, \mu]$ for some $\alpha \geq 0$ such that
$\mathbb{Q}_{\mathbb{M}}(I_{\alpha}(\mu^*))>0$.  Define  $\hat{E}_{L,N}$ to be the intersection of $E_{L,N}$ and the event $F_{L,N, \alpha}$ that the Poisson points in $[-L/2,L/2]^d$ have marks
in $I_{\alpha}(\mu^*)$.  If $\alpha$ is small enough, then the Laguerre and Johnson-Mehl cells nearly coincide with the Voronoi cells on the event $\hat{E}_{L,N}$.
Consideration of the events $\hat{E}_{L_1,N}$ and $\hat{E}_{L_2,N}$ shows that
$\Delta^{\rho_2} (\infty)$ and $\Delta^{\rho_3} (\infty)$ are non-degenerate, implying that
 $\sigma^2(\varphi^{\rho_2})>0$  and  $\sigma^2(\varphi^{\rho_3}) > 0$.
Thus Theorem \ref{thmApp1} holds for the Laguerre and Johnson--Mehl tessellations.
\qed

\vskip.3cm
\noindent{\em Remark.}
In the same way, one can establish that Theorem \ref{thmApp1} holds for any $h$ taking the form
$$
h(K) = \textbf{1} \{g(K) \leq t\} \quad \text{or} \quad h(K) = \textbf{1} \{g(K) > t\}
$$
for $t \in (0, \infty)$ fixed and $g: \textbf{F}^d \to \R$, a scale dependent function. By scale dependent function we understand that $g(\alpha K) = \alpha^q g(K)$ for some $q \neq 0$ and all $K \in \textbf{F}^d$ and $\alpha \in (0, \infty).$ Examples of the function $g$ include
 (a) $g(K) := \mathcal{H}^{d-1}(\partial K)$,  (b) $g(K) := \text{diam}(K)$,  (c) $g(K) := \text{radius of the circumscribed ball of }K$, and
(d) $g(K) := \text{radius of the circumscribed ball of }K$.

\vskip.3cm

\noindent{\bf Proof of Theorem \ref{thmApp2}.}
The unbiasedness is again a consequence of Theorem \ref{unbias}(i). To prove the asymptotic normality, we need to check the $p$-moment condition for $\xi^{\rho_i}(\hx, \eta) :=
\mathcal{H}^{d-1}(\partial C^{\rho_i} (\hx, \eta))\1\{C^{\rho_i}(\hx,\eta) \text{ is bounded}\}$ and the positivity of  $\sigma^2(\xi^{\rho_i}),  i=1, 2, 3$.

First we verify the moment condition with $p=5$. Given any $\hx,\hy \in \hR$, we assert that $\E \H^{d-1}(\partial C^{\rho_i}(\hx,\eta \cup \{\hy\}))^5 \leq c < \infty$
for some constant $c$ that does not depend on $\hx$ and $\hy$.
From Proposition \ref{prop:stab} there is a random variable $R_{\hx}$ such that
\[
C^{\rho_i}(\hx,\eta \cup \{\hy\}) = \bigcap_{\hz \in (\eta \cup \{\hy\} \setminus \{\hx\}) \cap \hat{B}_{R_{\hx}}(x)} \mathbb{H}_{\hz}(\hx).
\]
As in Proposition \ref{lemm:radius} we find $D_{\hx}$ such that $C^{\rho_i}(\hx,\eta \cup \{\hy\}) \subseteq B_{D_{\hx}}(\hx)$.
Then
\begin{align*}
\H^{d-1}(\partial C^{\rho_i}(\hx,\eta \cup \{\hy\})) &\leq \sum_{\hz \in (\eta \cup \{\hy\} \setminus \{\hx\}) \cap \hat{B}_{R_{\hx}}(x)}
\H^{d-1}(\partial \mathbb{H}_{\hz}(\hx) \cap B_{D_{\hx}}(\hx)) \\
&\leq c_{i,d} D_{\hx}^{d-1} \eta(\hat{B}_{R_{\hx}}(x))
\end{align*}
for some constant $c_{i,d}$ that depends only on $i$ and $d$.
Using the Cauchy--Schwarz inequality we get
\[
\E \H^{d-1}(\partial C^{\rho_i}(\hx,\eta \cup \{\hy\}))^5 \leq  c_{i,d}^5 (\E D_{\hx}^{10(d-1)})^{1/2} (\E \eta(\hat{B}_{R_{\hx}}(x))^{10})^{1/2}.
\]
By the property of the Poisson distribution we have
\[
\E \eta(\hat{B}_{R_{\hx}}(x))^{10} = \E (\E (\eta(\hat{B}_{R_{\hx}}(x))^{10} \mid R_x)) = \E P(\Vol(B_{R_{\hx}}(x))),
\]
where $P(\cdot)$ is a polynomial of degree 10. Both $D_{\hx}$ and $R_{\hx}$ have exponentially decaying tails and the decay is not depending on $x$.
Therefore, $(\E D_{\hx}^{10(d-1)})^{1/2} (\E \eta(\hat{B}_{R_{\hx}}(x))^{10})^{1/2}$ is bounded and the moment condition is satisfied with $p=5$.

The positivity  of the asymptotic variance can be shown similarly as in the proof of Theorem \ref{thmApp1}. We will show it only for the Voronoi case, as the Laguerre and Johnson--Mehl tessellations can be treated similarly. We will again find a random variable $S$ and a
$\Delta^{\rho_1}(\infty)$ such that for all finite ${\cal A} \subseteq \hat{B}_S(\0)^c$ we have
\begin{align*}
\Delta^{\rho_1}(\infty) &  = \sum_{ \hx \in (\eta \cap \hat{B}_S(\0)) \cup {\cal A} \cup \{\0_M \} }
  \xi^{\rho_1} (\hx, (\eta \cap \hat{B}_S(\0)) \cup {\cal A} \cup \{\0_M\} ) \\
 &   \hspace{1.5cm}   - \sum_{ \hx \in (\eta \cap \hat{B}_S(\0)) \cup {\cal A} } \xi^{\rho_1} (\hx, (\eta \cap \hat{B}_S(\0)) \cup {\cal A} )
\end{align*}
and moreover $\Delta^{\rho_1}(\infty)$ assumes different values on two events having positive probability and is thus non-degenerate. By Theorem 2.1 of \cite{PY1}, this is enough to show the positivity
of $\sigma^2(\xi^{\rho_1})$.

Let $L>0$ and let $N \in \N$ have odd parity. Abusing notation, we construct a collection of $N^d$ cubes $Q_{L,1}, \ldots, Q_{L,N^d}$ centered around $x_i \in \R^d, i= 1, \ldots, N^d$ such that
\begin{enumerate}
\item[(i)]  $Q_{L,i}$ has  side length $ \frac{L} {N}  $, and
\item[(ii)]  $\cup \{ Q_{L,i}, i=1, \ldots, N^d\} = [-\frac{L} {2}, \frac{L} {2}]^d$.
\end{enumerate}
There is a unique index $i_0 \in \{1, \ldots, N^d\}$ such that $x_{i_0} = \0$.
We define $\varepsilon_L, \hat{Q}_{L,i}$ and the event $E_{L,N}$ as in the proof of Theorem \ref{thmApp1}.
Note that under $E_{L,N}$
$$
\inf_{(x, m_x) \in \eta \cap \hat{Q}_{L,i_0} } \|x\| \leq \eps_L.
$$

Hence, on the event $E_{L,N}$, the insertion of the origin into the point configuration creates a new face of the tessellation whose  surface area is bounded below by $c_{min} (L/N)^{d-1}$
and bounded above by $c_{max} (L/N)^{d-1}$.
Thus
$$
c_{min} \left(\frac{L}{N}\right)^{d-1} + O\left(\varepsilon_L\left(\frac{L}{N}\right)^{d-2}\right)
\leq \Delta^{\rho_1}(\infty) \leq c_{max} \left(\frac{L}{N}\right)^{d-1} - O\left(\varepsilon_L\left(\frac{L}{N}\right)^{d-2}\right),
$$
where $O(\varepsilon_L\left(\frac{L}{N}\right)^{d-2})$ is the change in the combined surface areas of the already existing faces after inserting the origin. Events $E_{L_1,N}, E_{L_2, N}, L_1 <  L_2,$ both occur with positive
probability for any $L_1, L_2$.
Similarly as in the proof of Theorem \ref{thmApp1} we can find $N$, $S$, $L_1$ and $L_2$
($L_2-L_1$ large enough)
such that the value of $\Delta^{\rho_1}(\infty)$ differs on each event. Thus $\sigma^2(\xi^{\rho_1})$ is strictly positive.

To show that $\sigma^2(\xi^{\rho_2})$ and $\sigma^2(\xi^{\rho_3})$ are strictly positive we argue as follows. The Laguerre and Johnson-Mehl tessellations are close to the
Voronoi tessellation on the event $F_{L,N, \alpha}$, for $\alpha$ small.  Arguing as we did in the proof of Theorem \ref{thmApp1} and considering the event $\hat{E}_{L,N}$ given in the proof of that
theorem, we may conclude that $\sigma^2(\xi^{\rho_2})>0$  and  $\sigma^2(\xi^{\rho_3}) > 0$.  \qed

\section*{Acknowledgement}
The research of Flimmel and Pawlas is supported by the Czech Science Foundation, project 17-00393J, and by Charles University, project SVV 2017 No. 260454. The research of Yukich is supported by a Simons Collaboration Grant.  He thanks Charles University for its kind hospitality and support.

\end{document}